\title{Sums, Differences and Dilates}
\author{
Jonathan Cutler \and Luke Pebody \and Amites Sarkar
}
\date{\today}
\newcommand{\Z}{\mathbb{Z}}
\newcommand{\R}{\mathbb{R}}
\newcommand{\E}{\mathbb{E}}
\newcommand{\Prb}{\mathbb{P}}
\newcommand{\Var}{\mathrm{Var}}
\newcommand{\mult}{\mathrm{Mult}}
\newtheorem{thm}{Theorem}
\newtheorem{lem}[thm]{Lemma}
\newtheorem{clm}[thm]{Claim}
\newtheorem{cor}[thm]{Corollary}
\newtheorem{qn}[thm]{Question}
\newtheorem{conj}[thm]{Conjecture}
\theoremstyle{definition}
\newtheorem*{defn}{Definition}
\DeclarePairedDelimiter{\norm}{\lVert}{\rVert}
\newcommand{\set}[1]{\left\{{#1}\right\}}
\newcommand{\setof}[2]{\left\{{#1}\,:\,{#2}\right\}}
   \def\MR#1{}
\begin{document}
\maketitle

\begin{abstract}
Given a set of integers $A$ and an integer $k$, write $A+k\cdot A$ for the set
$\{a+kb:a\in A,b\in A\}$. Hanson and Petridis~\cite{HPSource}
showed that if $|A+A|\le K|A|$ then $|A+2\cdot A|\le K^{2.95}|A|$ and also that $|A+2\cdot A|\le(K|A|)^{4/3}$.
We present a new construction, the {\em Hypercube+Interval construction}, which lies close to these upper bounds
and which shows in particular that, for all $\epsilon>0$, there exist $A$ and $K$
with $|A+A|\le K|A|$ but with $|A+2\cdot A|\ge K^{2-\epsilon}|A|$.

Further, we analyse a method of Ruzsa~\cite{RuzsaRandom}, and generalise it to
give fractional analogues of the sizes of sumsets, difference sets and dilates. We apply this method
to a construction of Hennecart, Robert and Yudin~\cite{HRY} to prove that, for all $\epsilon>0$, there exists a
set $A$ with $|A-A|\ge |A|^{2-\epsilon}$ but with $|A+A|<|A|^{1.7354+\epsilon}$.

The second author would like to thank E. Papavassilopoulos for useful
discussions about how to improve the efficiency of his computer searches.
\end{abstract}

\section{Introduction and Definitions}\label{S:Introduction}

The study of the size of the sumset $|A+A|$ and difference set $|A-A|$
(sometimes denoted $DA$)
in terms of $|A|$ is a central theme in additive
combinatorics. For instance, {\it Freiman's theorem}
states that if $|A+A|\le K|A|$, then $A$ must be a large fraction of a {\it
generalized arithmetic progression}, and the {\it Balog-Szemer\'edi-Gowers
theorem} states that if $A$ has large {\it additive energy}, then $A$ must
contain a large subset $A'$ such that $|A'+A'|/|A'|$ is small.
For the precise definitions and statements, we refer the reader to~\cite{Zhao}.

For a finite set $A\subset\Z$, with $|A|=n$, we have that
\[
|A+A|\le \frac{n(n+1)}{2}{\rm \ \ \ and\ \ \ }|A-A|\le n^2-n+1,
\]
with equality in both cases precisely when $A$ is a {\it Sidon set},
that is, a set
containing no nontrivial {\it additive quadruple} $(a,b,c,d)\in A^4$
with $a+b=c+d$
(and consequently no nontrivial $(a,b,c,d)$ with $a-b=c-d$). In other words,
if $|A+A|$ is as large as it can possibly be, then so is $|A-A|$, and
conversely. In 1992,
Ruzsa~\cite{RuzsaRandom} showed, using an ingenious probabilistic construction,
that $|A+A|$ can be small, while $|A-A|$ can be almost as large as possible,
and vice-versa.  In particular, he showed the following.
\begin{thm}[Ruzsa, 1992]
	For every large enough $n$, there is a set $A$ such that $|A|=n$ with
	\[
		|A+A|\leq n^{2-c}\qquad\text{and}\qquad |A-A|\geq n^{2}-n^{2-c},
	\]
	where $c$ is a positive absolute constant.
	Also, there is a set $B$ with $|B|=n$,
	\[
		|B-B|\leq n^{2-c}\qquad\text{and}\qquad |B+B|\geq \frac{n^2}2-n^{2-c}.
	\]
\end{thm}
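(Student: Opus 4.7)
The plan is to produce $A$ (and symmetrically $B$) by a probabilistic construction exploiting the fact that the additive and subtractive \emph{energies} of any set coincide (via the bijection $(a,b,c,d)\leftrightarrow(a,c,d,b)$ identifying $\{a+b=c+d\}$ with $\{a-c=d-b\}$), while the \emph{supports} of the multiplicity functions $\mult_{A+A}$ and $\mult_{A-A}$ can differ substantially: the same $L^2$ mass can be lumped on few high-multiplicity sums (small $|A+A|$) or spread over many low-multiplicity differences (large $|A-A|$). The Pl\"unnecke--Ruzsa inequality $|A-A|\le|A+A|^2/|A|$ restricts us to $c\le 1/2$, inside which we aim to exhibit a positive absolute constant.

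For the first assertion, I would take $A$ to be a random set embedded in $\Z$ via a base-$M$ encoding $\Z^d\hookrightarrow\Z$, $v\mapsto\sum_iv_iM^i$, with $M$ large enough that integer $+$ and $-$ correspond to componentwise $+$ and $-$ on $\Z^d$. Pick an additively structured ambient $V\subset\Z^d$ with small sumset --- for instance a coordinate-sum level set $\{v\in\{0,\dots,m-1\}^d:\sum_iv_i=s\}$, a generalized arithmetic progression, or a set built by independent random signs on a sum-structured base --- and let $A$ be a uniformly random size-$n$ subset of $V$. Then $|A+A|\le|V+V|$ is automatically controlled by the structure of $V$, while the defect
\[
n^2-|A-A|\le\bigl|\{(a,b,c,e)\in A^4:a-b=c-e,\ (a,b)\ne(c,e)\}\bigr|
\]
can be bounded in expectation by counting non-trivial $4$-tuples $(v_1,v_2,v_3,v_4)\in V^4$ with $v_1-v_2=v_3-v_4$, scaled by $(n/|V|)^4$. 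Choosing the parameters $|V|$, $d$, and the doubling $|V+V|/|V|$ appropriately, one obtains both $|A+A|\le n^{2-c}$ and, with positive probability, $n^2-|A-A|\le n^{2-c}$.

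For the second assertion one runs the mirror construction with $+$ and $-$ exchanged, taking $V$ to have a small difference set and showing that the sumset of a random sparse subset is nearly maximal. The main technical obstacle is the second-moment estimate on non-trivial coincidences: the naive bound $E_-(V)\le|V|^3$ is far too weak to force the expected defect below $n^{2-c}$, so one needs a sharper upper bound on $E_-(V)$, which must come from an asymmetry in the ambient structure rather than from random sampling alone (since, by Pl\"unnecke--Ruzsa applied to $V$ itself, uniform random subsets of a symmetric low-doubling ambient cannot produce the required gap). Ruzsa's ingenious choice of $V$ --- constructed using independent random signs or perturbations on a sum-structured base --- is precisely what simultaneously delivers small $|V+V|$ and the required control on $E_-(V)$, and yields the absolute positive constant $c$ in the statement.
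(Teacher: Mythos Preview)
The paper does not itself prove this theorem --- it is quoted as Ruzsa's 1992 result --- but Section~\ref{S:RuzsaHRY} describes Ruzsa's method and essentially reproves the first assertion via the Claim there, so there is something concrete to compare against. Your proposal has the right architecture (random sampling inside a structured ambient $V$, with $|A+A|$ controlled deterministically by $|V+V|$ and the difference-set defect controlled probabilistically by a second-moment count on additive quadruples), but it has a genuine gap: you never actually specify a $V$ that works. You list candidates, correctly observe that the naive bound $E_{-}(V)\le|V|^3$ is far too weak, and then defer the resolution to ``Ruzsa's ingenious choice of $V$'' without saying what that choice is. A sketch that ends by invoking the very construction it is supposed to exhibit is not a proof.

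Your description of that choice --- ``independent random signs or perturbations on a sum-structured base'' --- also does not match what the paper records. Ruzsa's ambient is the tensor power $S^n$ of a small \emph{deterministic} seed $S\subset\Z$ whose multiplicity spectra for $S+S$ and $S-S$ differ; the paper's worked example is $S=\{0,1,3\}$, with sum-multiplicities $\{1,1,1,2,2,2\}$ and difference-multiplicities $\{1,1,1,1,1,1,3\}$. One then samples each point of $S^n$ independently with probability $q^n$, tuning $q$ so that (in the paper's language) the fractional dilate $\alpha-\alpha$ is spartan --- forcing $|S_n-S_n|=|S_n|^2-o(|S_n|^2)$ --- while $\alpha+\alpha$ is not, so that $|S_n+S_n|$ grows only like $|S_n|^{2-c}$ for an explicit $c>0$. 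There is no randomness in the ambient and no sign-flipping; the $+/-$ asymmetry is produced entirely by the multiplicity asymmetry of a fixed three-element seed, amplified multiplicatively by tensoring. Your framework could in principle accommodate this (take $V=S^n$), but the mechanism you name for generating the asymmetry is not the one Ruzsa uses.
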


A few years later, Hennecart, Robert and Yudin~\cite{HRY} constructed a set $A$
of size $n$ with $|A+A|\sim n^{1.4519}$ but $|A-A|\sim n^{1.8462}$. Their
construction was inspired by convex geometry, specifically the {\it difference
body inequality} of Rogers and Shephard~\cite{RogShep}. In the other direction,
the study and classification of {\it MSTD sets} (sets with more sums than
differences) began with Conway in 1967, and has now attracted a large literature
(see~\cite{IMLZ} for a recent survey).

Note that Hennecart, Robert and Rudin in fact constructed a set $A\subset \Z^d$.
But any such construction in $\Z^d$ can be easily translated to a construction in $\Z$ using
an appropriately chosen {\it Freiman homomorphism}, i.e., a map $\phi$ of the form
\[
\phi(x_1,\ldots,x_d)=\lambda_1x_1+\cdots+\lambda_dx_d,
\]
for an appropriate choice of integers $\lambda_i$.

Another line of investigation was opened by Bukh~\cite{Bukh} in 2008.
Given a set of integers $A$ and an integer $k$, define the {\em dilate set} $A+k\cdot A$ by
\[
A+k\cdot A=\{a_1+ka_2:a_1, a_2\in A\}.
\]
For $k=1$, this is just the sumset, $A+A$, and for $k=-1$ it is the
difference set, $A-A$.
Note that, for example, the dilate set $A+2\cdot A$ is generally
a strict subset of $A+A+A$, where each of the three summands can be distinct.

Bukh proved many results on general sums of dilates
$\lambda_1\cdot A+\cdots+\lambda_k\cdot A$
(for arbitrary integers $\lambda_1,\ldots,\lambda_k$), including lower and
upper bounds on their sizes. Some of these results were phrased in terms of sets
with {\it small doubling}, namely, sets $A\subset\Z$ with $|A+A|\le K|A|$, for
some fixed constant $K$ (known as the {\it doubling constant}).
For such a set $A$,
{\it Pl\"unnecke's inequality}~\cite{Plu} (see also~\cite{Pet}) shows that
\[
|A+2\cdot A|\le|A+A+A|\le K^3|A|,
\]
and Bukh asked if the exponent 3 could be improved. This question was
answered affirmatively in 2021 by Hanson and Petridis~\cite{HPSource},
who proved the following.
\begin{thm}[Hanson-Petridis, 2021]\label{thm:hp1}
	If $A\subset \Z$ and $|A+A|\leq K|A|$, then
\[
	|A+2\cdot A|\le K^{2.95}|A|.
\]
\end{thm}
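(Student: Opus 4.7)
The plan is to improve the trivial Pl\"unnecke-Ruzsa bound $|A+2\cdot A|\le|A+A+A|\le K^{3}|A|$ by exploiting the identity $a+2b=a+b+b$, in which the second summand is repeated: the ``diagonal'' triples $(a,b,b)\in A^{3}$ that parametrise $A+2\cdot A$ form a rigid subfamily of those parametrising $A+A+A$, and converting this rigidity into a genuine sumset saving is the source of the improvement from $K^{3}$ to $K^{2.95}$.

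The structural backbone is Petridis's refinement of Pl\"unnecke-Ruzsa. First I would pass to a nonempty $B\subseteq A$ minimising $K_{0}=|B+A|/|B|\le K$; Petridis's key lemma then gives $|B+A+C|\le K_{0}|B+C|$ for every finite $C\subseteq\Z$, and iterating yields $|B+n\cdot A-m\cdot A|\le K_{0}^{n+m}|B|$. Ruzsa's covering lemma supplies $T\subseteq A$ with $|T|\le K_{0}$ and $A\subseteq B-B+T$, so bounds on $B$-based sumsets transfer to $A$-based sumsets at a cost of at most one extra factor of $K_{0}$. To break below the integer exponent $3$, I would then insert an energy step on the representation function
\[
r_{A+2\cdot A}(x)=\sum_{b\in A}\mathbf{1}_{A}(x-2b),
\]
whose second moment counts quadruples $(a_{1},b_{1},a_{2},b_{2})\in A^{4}$ satisfying $a_{1}-a_{2}=2(b_{2}-b_{1})$, that is, differences $d\in(A-A)\cap 2\cdot(A-A)$ weighted by $r_{A-A}(d)\,r_{A-A}(d/2)$. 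The Pl\"unnecke-Petridis bounds above control these representation functions, and combined with dyadic pigeonholing this yields $|A+2\cdot A|\lesssim|A|^{4}/E$ with $E$ a mixed energy whose size is governed by $K$; the energy saving compared to the trivial count $E\ge|A|^{2}$ is exactly what undercuts exponent $3$.

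The main obstacle is the quantitative optimisation rather than any one inequality. Each ingredient (Petridis's lemma, Ruzsa covering, the Cauchy-Schwarz/energy step, Pl\"unnecke applied to level sets of $r_{A-A}$) naturally produces an integer or half-integer exponent in $K$; the number $2.95$ is the output of optimising the free parameters (the choice of the Petridis subset, the weights in Cauchy-Schwarz, the Pl\"unnecke exponents used on each dyadic level set) of the combined argument, and is unlikely to admit a clean closed form. I would therefore first target any explicit exponent strictly less than $3$ as the qualitative statement, and only afterwards tune the decomposition to push the bound down to the stated $K^{2.95}$.
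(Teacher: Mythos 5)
This theorem is not proved in the paper at all: it is a result of Hanson and Petridis that the paper simply cites from~\cite{HPSource} as background, so there is no internal proof to compare your plan against. That said, there are concrete problems with the sketch worth flagging.

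The central issue is the energy step. Writing $r(x)=r_{A+2\cdot A}(x)=\sum_{b\in A}\mathbf{1}_A(x-2b)$ and $E=\sum_x r(x)^2$, Cauchy--Schwarz gives
\[
|A|^{4}=\Bigl(\sum_x r(x)\Bigr)^{2}\le |A+2\cdot A|\cdot E,
\]
i.e.\ $|A+2\cdot A|\ge |A|^{4}/E$. You wrote $|A+2\cdot A|\lesssim |A|^{4}/E$; the inequality goes the other way. Lower bounds on a mixed energy therefore only reproduce lower bounds on $|A+2\cdot A|$, which is the wrong direction entirely for the target claim. To make this route work you would need some mechanism (say, a Balog--Szemer\'edi--Gowers style passage to a structured subset, plus a covering step) that converts large energy into an upper bound on the sumset, and nothing in the sketch supplies that. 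As written the energy step is a dead end, not merely an unfinished optimisation.

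The Petridis and Ruzsa-covering scaffolding you set up is standard and correctly stated, but by itself it yields exactly the trivial bound $|A+2\cdot A|\le |A+A+A|\le K^{3}|A|$; nothing in your outline identifies the extra combinatorial idea that breaks the integer exponent. The acknowledgement at the end that you would first aim for ``any exponent strictly less than $3$'' is honest, but it concedes that the sketch does not currently contain a proof of the stated theorem. If you want to reconstruct the actual argument, you should go to the Hanson--Petridis paper itself; their proof does not proceed by the second-moment Cauchy--Schwarz step you propose, and the exponent $2.95$ arises from an explicit optimisation inside a genuinely different chain of sumset inequalities.
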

\noindent They were also able to prove a result that improves Theorem~\ref{thm:hp1} when
$K$ is large.
\begin{thm}[Hanson-Petridis, 2021]\label{thm:hp2}
If $A\subset \Z$ and $|A+A|\leq K|A|$, then
\[
	|A+2\cdot A|\le(K|A|)^{4/3}.
\]
\end{thm}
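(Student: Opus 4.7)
The bound $|A+2\cdot A|\le(K|A|)^{4/3}$ follows from the stronger and cleaner statement $|A+2\cdot A|^3\le|A+A|^4$ by substituting $|A+A|\le K|A|$, so I would aim for the latter.

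My starting observation is the elementary identity $(a+2b)+c=(a+b)+(b+c)$, which implies the containment $(A+2\cdot A)+A\subseteq(A+A)+(A+A)=4A$. Combined with Pl\"unnecke's inequality this gives
\[
|(A+2\cdot A)+A|\le|4A|\le K^4|A|.
\]
This upper bound must be coupled with a suitable lower bound in order to isolate $|A+2\cdot A|$; the trivial bound $|(A+2\cdot A)+A|\ge|A+2\cdot A|$ only recovers $|A+2\cdot A|\le K^4|A|$, which is in fact weaker than the $K^3|A|$ bound of Bukh.

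To squeeze out the $(K|A|)^{4/3}$ bound I would couple the containment with the Pl\"unnecke--Petridis method: fix $X\subseteq A$ minimising $|X+A|/|X|$ with ratio $\kappa\le K$, and exploit the key inequality $|X+A+C|\le\kappa\,|X+C|$ for arbitrary $C$. Taking $C=A+2\cdot A$ (or variants built from further dilates) and iterating should, after careful bookkeeping, match the cubic dependence on $|A+2\cdot A|$ demanded by $|A+2\cdot A|^3\le|A+A|^4$.

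The main obstacle is that the dilation by $2$ breaks the symmetric Pl\"unnecke-graph structure on $A$, so a na\"ive application of Pl\"unnecke--Petridis recovers only $|A+2\cdot A|\le K^3|A|$. Refining to exponent $4/3$ hinges on either a Cauchy--Schwarz argument on the representation function $r(w)=|\{(s,c)\in(A+2\cdot A)\times A:s+c=w\}|$ (exploiting that its second moment is controlled by $|A+A|$ via the dilation identity above), or an explicit injection from $(A+2\cdot A)^3$ into a product of sumsets of total size at most $K^4|A|^4$, constructed from fixed representations $s=a_s+2b_s$ via a carefully chosen cyclic combination of the $a_{s_i},b_{s_j}$'s. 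The natural cyclic candidates produce a $3\times 3$ linear system with coefficient matrix of determinant $\pm 7$, so one must either absorb a harmless mod-$7$ ambiguity into an additional bounded coordinate or bound fibers by an absolute constant; verifying that no spurious factors of $|A|$ or $K$ leak into the final bound is the crux.
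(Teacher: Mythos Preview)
The paper does not contain a proof of this theorem: Theorem~\ref{thm:hp2} is quoted from Hanson and Petridis~\cite{HPSource} and used as a black box to bound the feasible region $F_{1,2}$ from above. There is therefore no ``paper's own proof'' to compare your proposal against.

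As to the proposal itself, it is not a proof but a plan with an explicitly acknowledged gap. Your reformulation $|A+2\cdot A|^{3}\le|A+A|^{4}$ is equivalent to the stated bound (since $K|A|=|A+A|$), and the containment $(A+2\cdot A)+A\subseteq 4A$ via $(a+2b)+c=(a+b)+(b+c)$ is correct. But from that point on everything is programmatic: you say the Pl\"unnecke--Petridis iteration ``should, after careful bookkeeping'' yield the exponent, you concede that the na\"ive application only gives $K^{3}|A|$, and you then list two candidate refinements (a Cauchy--Schwarz bound on a representation function, or a cyclic injection from $(A+2\cdot A)^{3}$) without carrying either out. Your own final sentence---that checking no stray factors of $|A|$ or $K$ appear ``is the crux''---is an admission that the actual content of the argument is missing. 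In particular, neither of your two suggested routes comes with a verified inequality; the injection idea even flags an unresolved determinant-$7$ ambiguity. Until one of these is executed end to end, this is a heuristic, not a proof.
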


\noindent Our contributions in this paper are best understood in the context of
{\it feasible regions} of the plane, and so we make the following definition.
\begin{defn}
For fixed integers $k$ and $l$,
we define the \emph{feasible region} $F_{k,l}$ to be the closure of the
set $E_{k,l}$ of {\it attainable points}
\[
E_{k,l}=
\left\{\left(
	\frac{\log|A+k\cdot A|}{\log|A|},
	\frac{\log|A+l\cdot A|}{\log|A|}
\right)\right\},
\]
as $A$ ranges over finite sets of integers.
\end{defn}
Note that for any $k$ and $l$, we have that $E_{k,l}\subset [1,2]^2$, which
follows from the fact that $|A|\le|A+k\cdot A|\le |A|^2$ for all $A$.  For every
$k$ and $l$, each $A$ produces a point in $E_{k,l}$. With $A$ fixed,
we can generate a sequence of sets, indexed by the dimension $d$, by taking a
Cartesian product $A^d\subset\Z^d$, and then we will have
$|A^d+k\cdot A^d|=|A+k\cdot A|^d$. The advantage of the logarithmic measure we
are using is
that all examples in this sequence, generated from the same set $A$, correspond
to the same point $(x,y)\in E_{k,l}$. Another useful fact is that the set
$F_{k,l}$ is convex. We prove this in
Section~\ref{S:Feasible Regions}.

The first series of results in this paper concerns the size of the dilate set
$A+2\cdot A$. We present a construction, the {\it Hypercube+Interval
construction}, which improves all previous bounds, and is close to the above upper
bounds of Hanson and Petridis. Specifically, this construction shows that the graph
of the piecewise-linear function
\[
y=\min\left(2x-1,(\log_34)x\right)=
\begin{cases}
2x-1&1\le x\le\log_{\frac94}3=1.3548\ldots\\
(\log_34)x&\log_{\frac94}3\le x\le 2\\
\end{cases}
\]
is entirely contained in $F_{1,2}$. This will allow us to prove a partial
converse to Theorem~\ref{thm:hp1}, namely that
for any $\epsilon>0$, there exists a positive constant $K$ and a set $A$
with $|A+A|\le K|A|$ but $|A+2\cdot A|\ge K^{2-\epsilon}|A|$. Thus the true
bound here is between 2 and 2.95.  We also give some negative results, showing
that neither Sidon Sets, nor
subsets of $\{0,1\}^d\subset\Z^d$, can give rise to feasible points outside the
regions already proved feasible. Finally, we give a lower bound for the region
$F_{1,2}$, which is an easy consequence of Pl\"unnecke's inequality.
All these bounds and constructions are illustrated in Figure~\ref{fig:feas}.

Our next series of results concerns the relationship between the sizes of $A+A$
and $A-A$, and thus relates
to the feasible region $F_{1,-1}$. This is one of the oldest topics in additive
combinatorics, with results
going back to Freiman and Pigarev~\cite{FP} and Ruzsa~\cite{Ruz1976} in the
1970s (and indeed Conway in the 1960s).

Our starting point is a 1992 paper of Ruzsa~\cite{RuzsaRandom}. Ruzsa
constructed sets $A\subset \Z^d$ for which $A-A$ is very large, but $A+A$ is very small,
in the following way. Start with a finite set $S\subset\Z$ with $|S+S|<|S-S|$.
Then, for a fixed probability $0<q<1$, select a random subset $A\subset\Z^d$ by
taking each element of $S^d$ independently with probability $q^d$. For an
appropriate choice of $q$, this ``boosts" the discrepancy between $|S+S|$
and $|S-S|$ enough to prove the first part of Theorem 1. The second part is
proved in a similar way.

In Section 4, we analyse and generalise Ruzsa's method
from~\cite{RuzsaRandom}, leading to the following concept, which can be seen
as a continuous analogue of the size of a sumset and that of a dilate.

\begin{defn}
A {\em fractional dilate $\gamma$} is a map $\gamma:\Z\to\R^{+}\cup\{0\}$ with
finite support ${\rm supp}(\gamma)$. We define the {\em size of a fractional dilate} to be
\[
\norm{\gamma}=\inf_{0\le p\le1}\sum_{n\in{\rm supp}(\gamma)}\gamma(n)^p.
\]
A {\em fractional set} is a fractional dilate $\alpha$ for which
$\alpha(n)\le 1$ for all $n\in \Z$.
\end{defn}

Note that, if $\alpha$ is a fractional set,
then $\norm{\alpha}=\sum_{n\in\Z}\alpha(n)$, i.e., the above infimum is attained at $p=1$.
On the other hand, if $\gamma$ is a dilate for which $\gamma(n)\ge 1$ for all $n\in\Z$, then
$\norm{\gamma}=|{\rm supp}(\gamma)|$, and the infimum is attained at $p=0$. In general, we describe
a fractional dilate as being {\em opulent}, {\em spartan} or {\em p-comfortable}
if the above infimum is attained at $p=0$, $p=1$ or $0<p<1$ respectively.
Theorem~\ref{T:fractionalsize} gives a useful alternative characterization of fractional dilates.

We can identify an actual subset $S$ of $\Z$ with the fractional set $\mathbbm{1}_S$, which is
both spartan and opulent. For any such sets $S$ and $T$, $\mathbbm{1}_S+k\cdot\mathbbm{1}_T$ will be
opulent, so that
\[
\norm{\mathbbm{1}_S+k\cdot\mathbbm{1}_T}=|S+k\cdot T|.
\]

Given a fractional set $\alpha$, let us say that a random set
$S_n\subseteq\Z^{n}$ is {\em drawn from $\alpha^n$} if each element of
$\Z^{n}$ is chosen independently, and the probability that
$(i_1, i_2, \ldots, i_n)$ is selected is
$\alpha(i_1)\alpha(i_2)\ldots\alpha(i_n)$. Moreover, for fractional sets
$\alpha, \beta$ and an integer $k$, let $\alpha+k\cdot\beta$ denote the
fractional dilate defined by the formula
\[
(\alpha+k\cdot\beta)(n)=\sum_{\substack{(i,j)\\i+kj=n}}\alpha(i)\beta(j).
\]

\noindent The point of these definitions is the following pair of theorems, which we prove in Section~\ref{S:Dilates 2}.

\begin{thm}\label{T:fractional}
Let $\alpha$ be a fractional set with $\norm{\alpha}> 1$, and suppose
$S_n\subseteq\Z^{n}$ is drawn from $\alpha^n$. Then
\[
\E|S_n|=\norm{\alpha}^n\ \ \ \ \  \Var|S_n|\le\norm{\alpha}^n
\]
and
\[
\lim_{n\to\infty}(\E|S_n+k\cdot S_n|)^{1/n}=\norm{\alpha+k\cdot\alpha}.
\]
\end{thm}
\begin{thm}\label{T:spartan}
Let $\alpha$ be a fractional set with $\norm{\alpha}> 1$, and suppose
$S_n\subseteq\Z^{n}$ is drawn from $\alpha^n$. If $\alpha+k\cdot\alpha$ is ``strictly spartan" (see later
for an explanation of this terminology), in the sense that
\[
\sum_{n\in{\rm supp}(\gamma)}\gamma(n)\log\gamma(n)<0,
\]
then, with probability tending to 1,
\begin{align*}
|S_n+k\cdot S_n|&\ge\tfrac12|S_n|^2\textrm{ if }k\ne 1\\
|S_n+k\cdot S_n|&\ge\tfrac14|S_n|^2\textrm{ if }k=1.
\end{align*}
\end{thm}

\noindent From now on, the phrase ``with high probability", abbreviated to {\bf whp}, means ``with
probability tending to 1 as the dimension (usually denoted by $n$) tends to infinity".

In Section~\ref{S:RuzsaHRY}, we apply these theorems to a construction of
Hennecart, Robert and Yudin~\cite{HRY}, to construct a fractional set $\alpha$
for which $\alpha-\alpha$ is spartan, but $\alpha+\alpha$ is not.

\begin{thm}\label{T:1.7354}
There exists a fractional set $\alpha$ for which $\norm\alpha>1$,
$\alpha-\alpha$ is strictly spartan (so that $\norm{\alpha-\alpha}=\norm{\alpha}^2$), and
$\norm{\alpha+\alpha}\le\norm{\alpha}^{1.7354}$.
\end{thm}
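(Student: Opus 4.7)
The plan is to lift the Hennecart--Robert--Yudin construction into the fractional framework and tune a single scaling parameter so that $\alpha-\alpha$ sits just inside the spartan regime while $\alpha+\alpha$ is comfortable. Explicitly, let $B \subset \Z$ be an HRY set with $|B|$ large, so that the multiplicity functions $m_{\pm}(x) := \mult_{B \pm B}(x)$ retain the convex-geometric asymmetry of the original construction: differences are highly concentrated near $0$ while sums are much more spread out. I set $\alpha = c \cdot \mathbbm{1}_B$ for a parameter $c \in (0,1]$ to be chosen. Then $\norm{\alpha} = c|B|$, which exceeds $1$ once $|B|$ is large, and $(\alpha \pm \alpha)(x) = c^2 m_{\pm}(x)$.

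The spartan condition for $\alpha - \alpha$ unfolds, via $\sum_x m_{-}(x) = |B|^2$, to the inequality $2|B|^2 \log c + \sum_x m_{-}(x) \log m_{-}(x) < 0$. This holds precisely for $c$ below the critical value $c^{*} = \exp(-H_{-}/2)$, where $H_{-} := |B|^{-2}\sum_x m_{-}(x) \log m_{-}(x)$. Taking $c$ equal to (or just less than) $c^{*}$ makes $\alpha - \alpha$ spartan, so $\norm{\alpha - \alpha} = c^2 |B|^2 = \norm{\alpha}^2$, which is the Sidon-like behaviour needed on the difference side. At this same $c$, I expect $\alpha + \alpha$ to be $p$-comfortable with some $p \in (0,1)$ determined by $\sum_x m_{+}(x)^p \log(c^2 m_{+}(x)) = 0$, and then
\[
\norm{\alpha + \alpha} = c^{2p} \sum_x m_{+}(x)^p.
\]

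To extract the numerical bound $1.7354$, I pass to the HRY limit, pushing the constructing simplex to high dimension. In that limit the discrete entropy $H_{-}$ and the moment $\sum_x m_{+}(x)^p$ become integrals of autocorrelations of the limiting convex body, of the same type studied by Rogers--Shephard~\cite{RogShep}. Optimising $\log\norm{\alpha + \alpha} / \log\norm{\alpha}$ over the one shape parameter remaining in the HRY family produces the numerical constant. The main obstacle I anticipate is precisely this final numerical step: one must verify that $c^{*}$ indeed places $\alpha + \alpha$ in the comfortable regime (rather than accidentally in the spartan or opulent one) and that the optimised exponent is strictly below $1.7354$. Conceptually, the gain over applying the HRY set directly, which gives only the fixed pair $(1.4519, 1.8462)$ in $F_{1,-1}$, is that the spartan/comfortable trade-off at the level of fractional sets pushes the difference exponent all the way to $2$, paid for by a controlled inflation of the sum exponent.
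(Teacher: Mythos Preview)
Your overall strategy matches the paper's exactly: take $\alpha = c\,\mathbbm{1}_B$ for an HRY simplex set $B = A_{k,d}$, push $c$ up to the threshold $c^*$ at which $\alpha-\alpha$ ceases to be spartan, and compare $\log\norm{\alpha+\alpha}$ to $\log\norm{\alpha}$ there. Your derivation of the spartan threshold is correct and is precisely what the paper does using the explicit difference multiplicity spectrum of $A_{k,d}$.

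Where you diverge from the paper is on the sum side, and you are making your life harder than necessary. You plan to locate the $p$ for which $\alpha+\alpha$ is $p$-comfortable and evaluate $c^{2p}\sum_x m_+(x)^p$. The paper sidesteps this entirely: since $\norm{\gamma}\le |\mathrm{supp}\,\gamma|$ for any fractional dilate, one has the trivial bound $\norm{\alpha+\alpha}\le |A_{k,d}+A_{k,d}|=\binom{2k+d}{d}$, and that is all that is used. In fact the paper remarks that at the optimal parameters $\alpha+\alpha$ appears to be \emph{opulent}, not comfortable, so your anticipated regime is probably the wrong one; but either way the support-size bound makes the question moot.

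The numerical endgame is also different. You propose to pass to a continuum limit and optimise integrals of Rogers--Shephard type over a single shape parameter. The paper does no such thing: it simply evaluates, for each finite pair $(k,d)$, the ratio $\log\binom{2k+d}{d}\big/\log\bigl(c^*\binom{k+d}{d}\bigr)$ with $c^*$ computed from the explicit difference spectrum, and reports a minimum of about $1.7354$ at $(k,d)=(987,14929)$. Your limiting approach is plausible but is left vague exactly at the point where the constant must be certified; the paper's finite computation avoids that issue.
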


\noindent This will allow us to prove that $(1.7354, 2)$ is feasible for $F_{1,-1}$.

\begin{cor}\label{C:1.7354}
For all $\epsilon>0$, there exists a finite subset $A\subseteq\Z$ such that
$|A-A|\ge|A|^{2-\epsilon}>1$ but $|A+A|\le|A|^{1.7354+\epsilon}$.
\end{cor}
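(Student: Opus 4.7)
The plan is to apply Theorem~\ref{T:fractional} to the fractional set $\alpha$ produced by Theorem~\ref{T:1.7354}, argue by concentration that a random $S_n\subseteq\Z^n$ drawn from $\alpha^n$ has the desired three statistics with positive probability, and transfer the resulting $S_n$ into $\Z$ via a Freiman-$2$-isomorphism.

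Set $N=\norm{\alpha}^n$ and draw $S_n$ from $\alpha^n$. Because $\alpha$ is a fractional set, $\E|S_n|=N$ and $\Var|S_n|\le\E|S_n|=N$, so Chebyshev gives $|S_n|\ge N(1-o(1))$ with probability tending to $1$. Since $\alpha-\alpha$ is spartan, $\norm{\alpha-\alpha}=\sum_n(\alpha-\alpha)(n)=\norm{\alpha}^2$; the $k=-1$ case of Theorem~\ref{T:fractional} then yields $\E(|S_n|^2-|S_n-S_n|)=o(N^2)$, so Markov's inequality applied to this nonnegative quantity forces $|S_n-S_n|\ge|S_n|^2-\eta N^2$ with probability $1-o(1)$ for any fixed $\eta>0$; combined with the Chebyshev bound this gives $|S_n-S_n|\ge(1-2\eta)N^2$ with high probability. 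Finally, since $\norm{\alpha}>1$, the same theorem gives $(\E|S_n+S_n|)^{1/n}\to\norm{\alpha+\alpha}\le\norm{\alpha}^{1.7354}$, so a further Markov step produces $|S_n+S_n|\le N^{1.7354+\delta}$ with probability bounded away from $0$, for any fixed $\delta>0$.

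A union bound then yields a specific $S\subseteq\Z^n$ satisfying $|S|\ge N/2$, $|S-S|\ge N^2/2$ and $|S+S|\le N^{1.7354+\delta}$. The support of $\alpha$ is finite, say contained in $[-M,M]$, so $S\subseteq[-M,M]^n$, and the $\Z$-linear map $\phi(v_1,\ldots,v_n)=\sum_i v_i(4M+1)^{i-1}$ is injective on $[-2M,2M]^n$, which makes $\phi$ a Freiman-$2$-isomorphism on $S$: $A=\phi(S)\subset\Z$ has $|A|=|S|$, $|A+A|=|S+S|$ and $|A-A|=|S-S|$. Choosing $\delta,\eta$ small in terms of $\epsilon$, and $n$ large, gives $|A-A|\ge|A|^{2-\epsilon}>1$ and $|A+A|\le|A|^{1.7354+\epsilon}$, as required.

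The main technical obstacle is the lower bound on $|S_n-S_n|$: since $|S_n|^2$ itself is of order $N^2$, converting the spartan estimate $\E(|S_n|^2-|S_n-S_n|)=o(N^2)$ into the correct lower bound requires pairing Markov on the (nonnegative) difference with a sharp Chebyshev bound on $|S_n|$ about $N$, rather than any coarser concentration argument. Once those concentration steps are in place, the sum-set upper bound and the base-$(4M+1)$ Freiman embedding are routine.
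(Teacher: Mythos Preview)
Your proof is correct and follows essentially the same route as the paper: Chebyshev on $|S_n|$ via $\Var|S_n|\le\E|S_n|$, Markov on the nonnegative quantity $|S_n|^2-|S_n-S_n|$ using the spartan estimate from Theorem~\ref{T:fractional}, and Markov on $|S_n+S_n|$ using the limit $(\E|S_n+S_n|)^{1/n}\to\norm{\alpha+\alpha}$. The only substantive addition is your explicit base-$(4M+1)$ Freiman embedding to pass from $S_n\subseteq\Z^n$ to $A\subseteq\Z$; the paper's own proof stops at $S_n$ and leaves this transfer implicit, so your version is in fact slightly more complete. One minor point: for the final inequalities you also need the \emph{upper} Chebyshev bound $|S_n|\le N(1+o(1))$ (to compare $|A-A|\ge N^2/2$ against $|A|^{2-\epsilon}$), which you use but do not state; since the same variance estimate gives it, this is harmless.
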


\begin{proof}
Let $\alpha$ be the fractional set with properties as in
Theorem~\ref{T:1.7354}, and let $S_n$ be drawn from $\alpha^n$.

\noindent First, from Theorem~\ref{T:fractional} and Chebyshev's inequality
\[
\Prb(||S_n|-\norm\alpha^n|>0.1\norm\alpha^n)
\le 100\Var|S_n|/\norm\alpha^{2n}
\le 100/\norm\alpha^n\to 0,
\]
so that with high probability
\[
||S_n|-\norm\alpha^n|\le 0.1\norm\alpha^n.\eqno(1)
\]
Second, since $\alpha-\alpha$ is strictly spartan, Theorem~\ref{T:spartan} shows that
with high probability
\[
|S_n-S_n|\ge\tfrac12|S_n|^2.\eqno(2)
\]
Finally, Theorem~\ref{T:fractional} shows that
\[
\lim_{n\to\infty}\E|S_n+S_n|^{1/n}\to \norm{\alpha+\alpha}=\norm{\alpha}^{1.7354}.
\]
Consequently, for all $\epsilon>0$, we will have
\[
\E|S_n+S_n|\le\norm{\alpha}^{(1.7354+\epsilon)n}
\]
for all sufficiently large $n$, and for such $n$
\[
\Prb(|S_n+S_n|>\norm{\alpha}^{(1.7354+2\epsilon)n})\to 0
\]
by Markov's inequality. Invoking (1), we have that for sufficiently large $n$
\[
\Prb(|S_n+S_n|>|S_n|^{1.7354+3\epsilon})\to 0,
\]
so that with high probability
\[
|S_n+S_n|\le |S_n|^{1.7354+3\epsilon}.\eqno(3)
\]
The conclusion of the corollary follows from (1), (2) and (3).
\end{proof}

In the other direction, it follows from results of Freiman and Pigarev~\cite{FP}
and Ruzsa~\cite{Ruz1976} that $(x,2)$ is not attainable for any
$x<3/2$. All these results are illustrated in Figure~\ref{fig:feas2}.
\begin{figure}[h]
	\begin{center}
		\includegraphics[scale=.3]{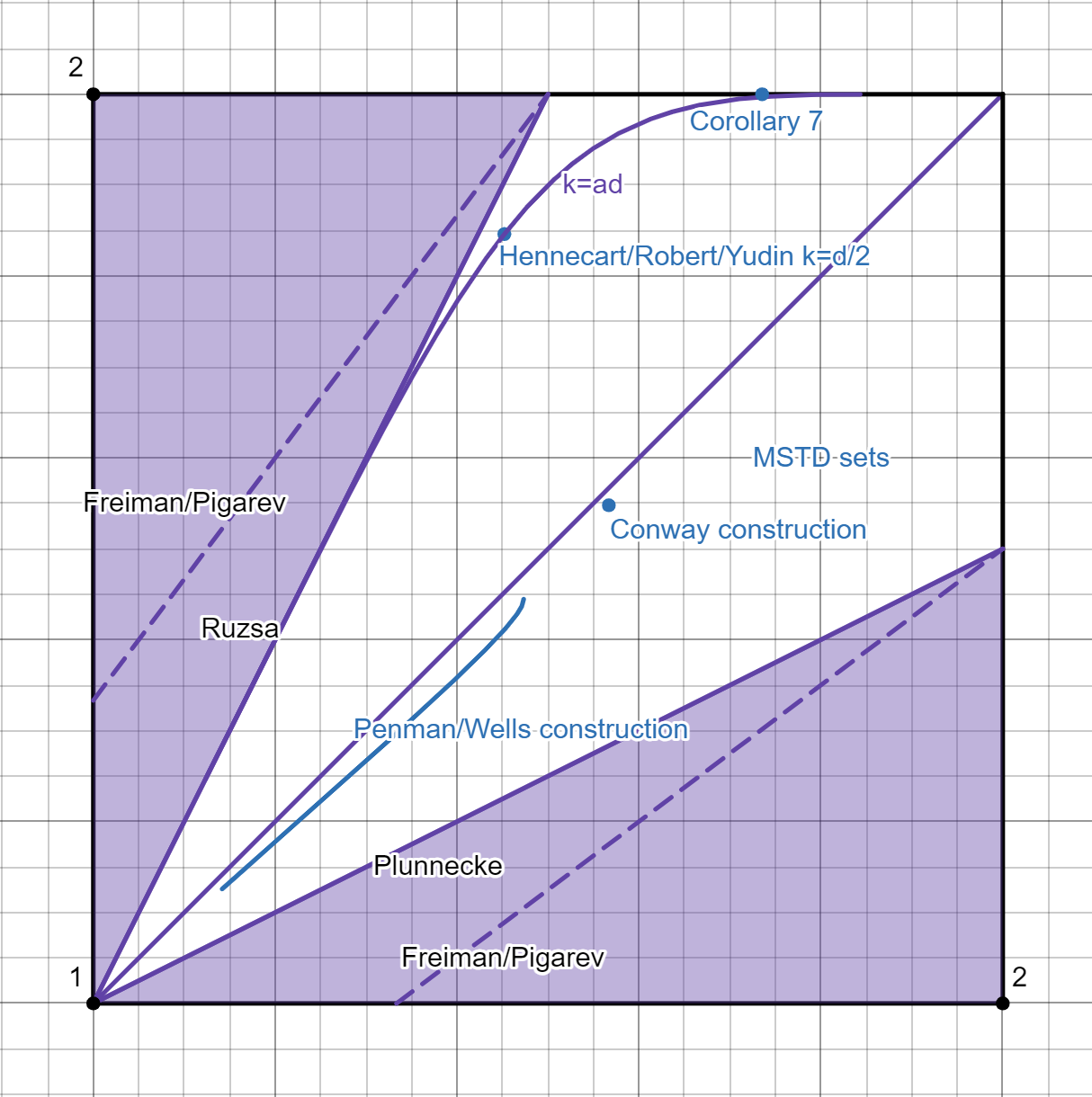}
		\caption{The feasible region $F_{1,-1}$}\label{fig:feas2}
	\end{center}
\end{figure}

Finally, in Section~\ref{S:Open Questions}, we discuss many open questions about
$F_{1,-1}$ and $F_{1,2}$,
and about feasible regions in general.

\section{Feasible Regions}\label{S:Feasible Regions}

We remind the reader of the definition of a feasible region. For fixed integers
$k$ and $l$, the {\it feasible region} $F_{k,l}$ is defined as the closure of
the set $E_{k,l}$ of {\it attainable points}
\[
E_{k,l}=
\left\{\left(
	\frac{\log|A+k\cdot A|}{\log|A|},
	\frac{\log|A+l\cdot A|}{\log|A|}
\right)\right\}\subset [1,2]^2,
\]
as $A$ ranges over finite sets of integers.  Note once again that the inclusion
follows from the fact that $|A|\le|A+k\cdot A|\le |A|^2$ for all such $A$.
Since $E_{k,l}\subset [1,2]^2$, we have also $F_{k,l}\subset [1,2]^2$.  As
mentioned in the introduction, we now prove that $F_{k,l}$ is convex.

\begin{thm}\label{T:basicbutnecessary}
For all nonzero $k,l$, the feasible region $F_{k,l}$ is convex, and contains the
diagonal $D=\{(x,x):1\le x\le 2\}$.
\end{thm}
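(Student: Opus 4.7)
The plan is first to establish convexity via a base-$N$ Cartesian product construction, and then to recover the diagonal from two simple extremal examples together with this convexity. For convexity, let $P_i \in E_{k,l}$ be the point associated to a finite set $A_i \subset \Z$ with $|A_i| \ge 2$, for $i = 1, 2$. Given positive integers $d_1, d_2$, I would consider the Cartesian product $B = A_1^{d_1} \times A_2^{d_2} \subset \Z^{d_1+d_2}$ and embed it into $\Z$ by the $\Z$-linear base-$N$ map $\phi(z_1, \ldots, z_{d_1+d_2}) = \sum_j z_j N^{j-1}$. Provided $N$ is taken sufficiently large relative to $k, l, A_1, A_2$, no carries occur in the coordinate-wise expressions for $a + k b$ or $a + l b$ when $a, b \in B$, so $\phi$ is injective on $B + k B$ and on $B + l B$. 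Combined with the linearity of $\phi$, this yields
\[
|\phi(B) + k\cdot\phi(B)| = |A_1 + k\cdot A_1|^{d_1}|A_2 + k\cdot A_2|^{d_2},
\]
and similarly with $l$ in place of $k$. Dividing the logarithm of each such identity by $\log|\phi(B)| = d_1\log|A_1| + d_2\log|A_2|$ shows that the point in $E_{k,l}$ associated to $\phi(B)$ equals $tP_1 + (1-t)P_2$ with $t = d_1\log|A_1|/(d_1\log|A_1| + d_2\log|A_2|)$. As $d_1/d_2$ ranges over the positive rationals, $t$ traces out a dense subset of $(0,1)$; since $F_{k,l}$ is closed, the whole segment $\overline{P_1 P_2}$ lies in $F_{k,l}$, and the density of $E_{k,l}$ in $F_{k,l}$ then upgrades this to convexity of $F_{k,l}$ itself.

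With convexity in hand, it suffices to place the endpoints $(1,1)$ and $(2,2)$ in $F_{k,l}$. For $(1,1)$ I would take the arithmetic progression $A_n = \{0, 1, \ldots, n-1\}$; both $A_n + k\cdot A_n$ and $A_n + l\cdot A_n$ sit in intervals of length $O(n)$, so the log-ratios tend to $1$ as $n \to \infty$. For $(2,2)$ I would take the super-increasing set $A_n = \{M, M^2, \ldots, M^n\}$ with $M$ chosen large in terms of $|k|$ and $|l|$; uniqueness of base-$M$ digit expansions then forces the $n^2$ numbers $M^i + k M^j$ to be pairwise distinct, and similarly with $l$, so both log-ratios are identically $2$.

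The only point where some care is needed is the no-carry condition for the base-$N$ embedding used in the convexity step, since this condition must hold simultaneously for the $+k\cdot$ and $+l\cdot$ operations. Once a safe lower bound on $N$ is recorded in terms of $k, l$, and the diameters of $A_1, A_2$, the remainder of the argument reduces to coordinate-by-coordinate bookkeeping.
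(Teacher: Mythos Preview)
Your proposal is correct and follows essentially the same approach as the paper: Cartesian products of witnessing sets to obtain convex combinations, an arithmetic progression for $(1,1)$, and a geometric progression for $(2,2)$. The only cosmetic differences are that you make the base-$N$ embedding $\Z^{d_1+d_2}\hookrightarrow\Z$ explicit (the paper works directly in $\Z^s$), and your claim that the $n^2$ values $M^i+kM^j$ are pairwise distinct with log-ratio ``identically $2$'' is a slight overstatement for $k=\pm 1$---the paper more cautiously writes $|B+k\cdot B|\ge\tfrac12|B|^2$---but the limit is $2$ regardless, so this does not affect the argument.
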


\begin{proof}
First we prove the convexity. To do this, we first consider points
$(x,y),(x',y')\in E_{k,l}$, and take $t\in[0,1]$. We will show that
$(tx+(1-t)x',ty+(1-t)y')\in F_{k,l}.$
Since $(x,y)\in E_{k,l}$, there exists a set $A\subset\Z$ with
\[
|A+k\cdot A|=|A|^x{\rm\ and\ }|A+l\cdot A|=|A|^y.
\]
Likewise, since $(x',y')\in E_{k,l}$, there exists a set $B\subset\Z$ with
\[
|B+k\cdot B|=|B|^{x'}{\rm\ and\ }|B+l\cdot B|=|B|^{y'}.
\]
Setting $\beta=\log|B|/\log|A|$, choose a sequence $q_1,q_2,\ldots$ of rational
numbers such that
\[
\lim_{i\to\infty} q_i=\frac{t\beta}{1-t+t\beta}.
\]
For each such $q_i=r/s$, we consider a set $A_i\subset\Z^s$, defined as
\[
A_i=\underbrace{A\times A\times\cdots\times A}_r\times
\underbrace{B\times B\times\cdots\times B}_{s-r},
\]
in which there are $r$ factors of $A$ and $s-r$ factors of $B$. We have
\begin{align*}
|A_i|&=|A|^r|B|^{s-r},\\
|A_i+k\cdot A_i|&=|A|^{rx}|B|^{(s-r)x'}, \text{ and}\\
|A_i+l\cdot A_i|&=|A|^{ry}|B|^{(s-r)y'}.
\end{align*}
Therefore,
\begin{align*}
\frac{\log|A_i+k\cdot A_i|}{\log|A_i|}
&=\frac{rx\log|A|+(s-r)x'\log|B|}{r\log|A|+(s-r)\log|B|}\\
&=\frac{q_ix+(1-q_i)x'\beta}{q_i+(1-q_i)\beta},
\end{align*}
which tends to $tx+(1-t)x'$ as $i\to\infty$. Similarly,
\[
\frac{\log|A_i+l\cdot A_i|}{\log|A_i|}\to ty+(1-t)y',
\]
as $i\to\infty$. Consequently, $(tx+(1-t)x',ty+(1-t)y')\in F_{k,l}$.

Now, given points $(x,y),(x',y')\in F_{k,l}$, we may take sequences of points $
(x_j,y_j)$ and $(x'_j,y'_j)$ from $E_{k,l}$ tending to $(x,y)$ and $(x',y')$
respectively. For each $j$, the above argument shows that
\[
(tx_j+(1-t)x_j',ty_j+(1-t)y'_j)\in F_{k,l}.
\]
Consequently, letting $j\to\infty$, we have that
\[
(tx+(1-t)x',ty+(1-t)y')\in F_{k,l},
\]
and the convexity is proved.

To show that $(1,1)\in F_{k,l}$, we consider the set $A:=A_N=\{1,2,\ldots,N\}$
for $N\gg\max(k,l)$. We have
\[
|A+k\cdot A|=(k+1)(N-1)+1{\rm\ \ and\ \ }|A+l\cdot A|=(l+1)(N-1)+1,
\]
so that, as $N\to\infty$,
\[
\left(\frac{\log|A+k\cdot A|}{\log|A|},
\frac{\log|A+l\cdot A|}{\log|A|}\right)\to(1,1).
\]
To show that $(2,2)\in F_{k,l}$, let $b>\max(|k|,|l|)+1$, and consider the set
$B=\{1,b,b^2,\ldots,b^N\}$.
We have
\[
|B+k\cdot B|\ge\tfrac12|B|^2{\rm\ \ and\ \ }|B+l\cdot B|\ge\tfrac12|B|^2,
\]
so that, as $N\to\infty$,
\[
\left(\frac{\log|B+k\cdot B|}{\log|B|},
\frac{\log|B+l\cdot B|}{\log|B|}\right)\to(2,2).
\]
It now follows by convexity that $D=\{(x,x):1\le x\le 2\}\subset F_{k,l}$.
\end{proof}

This result easily generalises to higher dimensions.

\section{A construction for $F_{1,2}$}\label{S:Construction for (1,2)}

In this section, we present various results about the feasible region $F_{1,2}$.
In particular, as stated in the introduction, we give a partial converse to
a result of Hanson and Petridis (Theorem~\ref{thm:hp1}).

%

Our construction, the {\em Hypercube + Interval construction}, is very simple. Let
\[
H_n=\left\{\sum_{i=0}^{n-1}a_i4^i:\ a_i\in\{0,1\}\right\},
\quad\quad
I_k=\left\{0,1,\ldots,\frac{4(4^{k-1}-1)}3\right\}
\quad\text{and}\quad
A_{n,k}=H_n\cup I_k.
\]
In other words, $H_n$ denotes the set of all natural numbers whose base $4$ representation
has length at most $n$ and contains only $0$s and $1$s (the hypercube), and $I_k$ is just
an interval.  We begin by giving bounds on the sizes of various sumsets and dilates related
to $H_n$ and $I_k$.

\begin{thm}~\label{T:sumsets}
For $n\ge k>\frac{n+1}2$, and with notation as above, we have:
\begin{align*}
|I_k|&=\frac{4^k-1}3\ge|H_n|=2^n\\
|H_n+H_n|&=3^n\\
|H_n+I_k|&=2^{n-k+1}\frac{4^{k}-1}3\ge|I_k+I_k|\\
|H_n+2\cdot H_n|&=4^n\ge \max\{|H_n+2\cdot I_k|,|I_k+2\cdot H_n|,|I_k+2\cdot I_k|\}.
\end{align*}
\end{thm}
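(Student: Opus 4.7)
My plan is to verify each of the items in turn, exploiting throughout the base-$4$ expansions that both $H_n$ and $I_k$ admit. I will use that every $h\in H_n$ has a unique representation $\sum_{i=0}^{n-1}a_i 4^i$ with $a_i\in\{0,1\}$, together with the numerical identity $(4^k-1)/3 = 1 + 4 + \cdots + 4^{k-1}$, which says in particular that the largest element of $H_k$ is exactly $|I_k|$.

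The four easiest items I would knock off first. The formula $|I_k|=(4^k-1)/3$ is the definition, and $|I_k|\ge |H_n|$ reduces to $4^k \ge 3\cdot 2^n + 1$, which follows from the integer-strengthened inequality $2k\ge n+2$. For $|H_n+H_n|=3^n$, the point is that whenever $a_i, b_i\in\{0,1\}$ the sum $a_i + b_i$ lies in $\{0,1,2\}$ and so fits inside a single base-$4$ digit without carrying, putting $H_n+H_n$ in bijection with $\{0,1,2\}^n$. Similarly $a_i + 2b_i$ ranges bijectively over $\{0,1,2,3\}$ as $a_i,b_i$ range over $\{0,1\}$, once again without carries, so $|H_n+2\cdot H_n|=4^n$.

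The main calculation is $|H_n+I_k|$. Writing each element of $H_n$ uniquely as $h' + 4^k h''$ with $h'\in H_k$ and $h''\in H_{n-k}$ (i.e.\ splitting its base-$4$ digits at position $k$), we get $H_n+I_k = (H_k+I_k) + 4^k\cdot H_{n-k}$. I claim $H_k+I_k = [0,\,2(4^k-1)/3)\cap\Z$: both $0$ and $(4^k-1)/3$ lie in $H_k$, so $H_k+I_k$ already covers $[0,(4^k-1)/3)\cup[(4^k-1)/3,2(4^k-1)/3)$, while $\max H_k + \max I_k = 2(4^k-1)/3 - 1$ prevents it from reaching further. Since the length $2(4^k-1)/3$ of this interval is strictly less than $4^k$, the $2^{n-k}$ translates by the distinct elements of $4^k\cdot H_{n-k}$ are pairwise disjoint, so $|H_n+I_k|=2^{n-k+1}(4^k-1)/3$.

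The four remaining comparisons are coarse range bounds. First, $|I_k+I_k|=2|I_k|-1 < 2|I_k|\le |H_n+I_k|$, since $n\ge k$ gives $2^{n-k+1}\ge 2$. For each of $H_n+2\cdot I_k$, $I_k+2\cdot H_n$ and $I_k+2\cdot I_k$, I would bound the set by the length of the integer interval from $0$ to its maximum element; using $\max H_n=(4^n-1)/3$, $\max I_k=(4^k-1)/3-1$, and $k\le n$, each of these lengths is at most $4^n=|H_n+2\cdot H_n|$. The only step requiring any real care is the interval identity $H_k+I_k=[0,2(4^k-1)/3)\cap\Z$, and this is forced by the fortunate fact that the two extreme values of $H_k$ are exactly $0$ and $|I_k|$.
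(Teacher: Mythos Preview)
Your proof is correct and follows essentially the same base-$4$ digit strategy as the paper: no-carry arguments for $H_n+H_n$ and $H_n+2\cdot H_n$, the inequality $2k\ge n+2$ for $|I_k|\ge|H_n|$, a splitting of $H_n$ at the $k$-th digit for $H_n+I_k$, and coarse range bounds for the remaining inequalities. The one place you diverge slightly is in showing $H_k+I_k=[0,\,2(4^k-1)/3)\cap\Z$: the paper builds this interval iteratively, using $[0,p)+\{0,q\}=[0,p+q)$ for $p\ge q$ to absorb the sets $\{0,1\},\{0,4\},\ldots,\{0,4^{k-1}\}$ one at a time, whereas you sandwich $H_k+I_k$ between $\{0,(4^k-1)/3\}+I_k$ and $[0,\max H_k+\max I_k]$, exploiting that the top element of $H_k$ happens to equal $|I_k|$. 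Your shortcut is a bit cleaner; the paper's iterative version has the small advantage of making the containment $I_k+I_k\subseteq H_k+I_k$ visible along the way.
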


\begin{proof}

The first assertion is trivial. For $H_n+H_n$, note that
\[
H_n+H_n=\left\{\sum_{i=0}^{n-1}a_i4^i:\ a_i\in\{0,1,2\}\right\}.
\]
In other words, $H_n+H_n$ consists of the natural numbers whose base $4$
representation has length at most $n$ and contains only $0$s, $1$s, and $2$s.
Thus $|H_n+H_n|=3^n$. Similarly, $H_n+2\cdot H_n$ consists of those natural numbers
whose base $4$ representation has length at most $n$ and contains only 0s, 1s, 2s and 3s,
but this is just $\{0,1,\ldots,4^n-1\}$. Further, since $\max H_n\ge \max I_k$, all of
the sets $H_n+2\cdot I_k$, $I_k+2\cdot H_n$ and $I_k+2\cdot I_k$ are subsets of
$\{0,1,\ldots,4^n-1\}$, and are therefore of size at most $4^n$.

It remains to bound $|H_n+I_k|$. To this end, note that
\[
H_n=\{0, 1\}+\{0, 4\}+\{0,4^2\}+\cdots+\{0, 4^{n-1}\}.
\]
Also, if $p\ge q$ are positive integers,
then
\[
\{0,1,\ldots,p-1\}+\{0, q\}=\{0,1,\ldots,p+q-1\}.
\]
Consequently,
\begin{align*}
H_n+I_k&=I_k+\{0,1\}+\{0,4\}+\cdots+\{0,4^{k-1}\}+\{0,4^k\}+\cdots+\{0,4^{n-1}\}\\
&=\left\{0,1,\ldots,\frac{4(4^{k-1}-1)}3\right\}+\{0,1\}+\{0,4\}+\cdots+\{0,4^{k-1}\}+\{0,4^k\}+\cdots+\{0,4^{n-1}\}\\
&=\left\{0,1,\ldots,\frac{4(4^{k-1}-1)}3+1\right\}+\{0,4\}+\cdots+\{0,4^{k-1}\}+\{0,4^k\}+\cdots+\{0,4^{n-1}\}\\
& \qquad\vdots\\
&=\left\{0,1,\ldots,\frac{2(4^k-1)}3-1\right\}+\{0,4^k\}+\cdots+\{0,4^{n-1}\}\supseteq I_k+I_k.
\end{align*}
Moreover, the set $\{0, 4^k\}+\cdots+\{0, 4^{n-1}\}$ consists of multiples of
$4^k$, which are all further than $2\left(\frac{4^k-1}3\right)$ apart. It follows that $H_n+I_k$ consists
of $2^{n-k}$ intervals of length $2\left(\frac{4^k-1}3\right)$.
%
\end{proof}

\noindent Recall that $A_{n,k}=H_n\cup I_k$. Using Theorem~\ref{T:sumsets}, we can show the following.

\begin{cor}\label{C:technicallowerbound}
Fix $\alpha\in\left(\frac12,1\right)$, and set $k=\lfloor\alpha n\rfloor$. Then, as $n\to\infty$,
\begin{align*}
\frac{\log|A_{n,k}|}n&\to\alpha\log 4\\
\frac{\log|A_{n,k}+A_{n,k}|}n
&\to\max\left\{\log 3,\frac{1+\alpha}2\log 4\right\}\\
\frac{\log|A_{n,k}+2\cdot A_{n,k}|}n
&\to \log 4.
\end{align*}
\end{cor}
\begin{proof}
For the first part, Theorem~\ref{T:sumsets} gives
\[
\lim_{n\to\infty}\frac{\log|H_n|}n
=\log 2
<\lim_{n\to\infty}\frac{\log|I_k|}n
=\alpha\log 4.
\]
Since $I_k\subseteq A_{n,k}$ and $|A_{n,k}|\le|H_n|+|I_k|$, it
follows that $\lim_{n\to\infty} \log|A_{n,k}|/n=\alpha \log 4.$
In other words, for the given range of parameters, the interval $I_k$
makes the dominant contribution to the size of $A_{n,k}$.

For the sumsets, we note that (again using Theorem~\ref{T:sumsets})
\begin{align*}
\lim_{n\to\infty}\frac{\log|H_n+H_n|}n &=\log 3\\
\intertext{and}
\lim_{n\to\infty}\frac{\log|H_n+I_k|}n
&=\frac{1+\alpha}2\log 4>\lim_{n\to\infty}\frac{\log|I_k+I_k|}n.\\
\end{align*}
Since $H_n+H_n$ and $H_n+I_k$ are both subsets of $A_{n,k}+A_{n,k}$,
and
\[
|A_{n,k}+A_{n,k}|\le|H_n+H_n|+|H_n+I_k|+|I_k+I_k|
\]
it follows that
\[
\lim_{n\to\infty}\frac{\log|A_{n,k}+A_{n,k}|}n=\max\left\{\log 3,\frac{1+\alpha}2\log 4\right\}.
\]
In other words, the dominant contribution to the size of $A_{n,k}+A_{n,k}$ comes from
$H_n+H_n$ if $\alpha<\log 3/\log 2-1\approx 0.585$, and from $H_n+I_k$ otherwise.

Finally, for the dilates, we have
\[
\lim_{n\to\infty}\frac{\log|A_{n,k}+2\cdot A_{n,k}|}n=\lim_{n\to\infty}\frac{\log|H_n+2\cdot H_n|}n=\log 4.
\]
\end{proof}

\noindent We can now use Corollary~\ref{C:technicallowerbound} to expand the known feasible region $F_{1,2}.$
Define $f:[1,2]\to[1,2]$ by
\[
f(x)=\begin{cases}
\frac12(\beta+1) &\text{if $1\le x\le\frac{\log 2}{\log (3/2)}$}\\
(\log_4 3)x &\text{if $\frac{\log 2}{\log (3/2)}\le x\le 2$}.
\end{cases}
\]

\begin{cor}\label{C:feas12}
For all $1<\beta<2$, $\left(f(\beta),\beta\right)\in F_{1,2}$.
\end{cor}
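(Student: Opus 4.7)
The plan is to obtain Corollary~\ref{C:feas12} as a direct translation of Corollary~\ref{C:technicallowerbound}. Given $\beta\in(1,2)$, set $\alpha=1/\beta\in(\tfrac12,1)$, and let $A_n=A_{n,\lfloor\alpha n\rfloor}=H_n\cup I_{\lfloor\alpha n\rfloor}$ be the Hypercube+Interval construction. Since $F_{1,2}$ is (by definition) closed and contains every limit of points of $E_{1,2}$, it suffices to identify the limiting point produced by the sequence $(A_n)$, and check that it equals $(f(\beta),\beta)$.

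First I would read off the three asymptotics from Corollary~\ref{C:technicallowerbound}:
\[
\frac{\log|A_n|}{n}\to\alpha\log 4,\quad
\frac{\log|A_n+A_n|}{n}\to\max\!\left(\log 3,\tfrac{1+\alpha}{2}\log 4\right),\quad
\frac{\log|A_n+2\cdot A_n|}{n}\to\log 4.
\]
Dividing the second and third by the first shows that the corresponding point in $E_{1,2}$ converges to
\[
\left(\frac{\max\!\left(\log 3,\tfrac{1+\alpha}{2}\log 4\right)}{\alpha\log 4},\ \frac{\log 4}{\alpha\log 4}\right)
=\left(\frac{\max\!\left(\log 3,\tfrac{1+\alpha}{2}\log 4\right)}{\alpha\log 4},\ \beta\right),
\]
so the $A+2\cdot A$ coordinate is already $\beta$, as required.

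The remaining step, which is the only nontrivial bookkeeping, is to verify that the first coordinate above equals $f(\beta)$ under the case split in the definition of $f$. The inequality $\log 3\ge \tfrac{1+\alpha}{2}\log 4$ rearranges to $\alpha\le 2\log_4 3-1=\log_4(9/4)$, i.e.\ $\beta\ge \log 4/\log(9/4)$. In this range the maximum is $\log 3$, and the first coordinate becomes $\log 3/(\alpha\log 4)=(\log_4 3)\beta$, matching the second branch of $f$. In the complementary range $\beta\le\log 4/\log(9/4)$ the maximum is $\tfrac{1+\alpha}{2}\log 4$, and the first coordinate collapses to $\tfrac{1+\alpha}{2\alpha}=\tfrac12(\beta+1)$, matching the first branch of $f$ (the symbol ``$\beta$'' appearing in the definition is just the first-coordinate variable $x$).

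Finally, because $E_{1,2}\subseteq F_{1,2}$ and $F_{1,2}$ is closed, the limit point $(f(\beta),\beta)$ belongs to $F_{1,2}$, completing the proof. The main ``obstacle'' is purely algebraic: keeping the reciprocals straight (the parameter switches from $\alpha$ in the construction to $\beta=1/\alpha$ in the feasible region) and confirming that the case boundary $\alpha=\log_4(9/4)$ in the construction matches the stated boundary $x=\log 4/\log(9/4)$ in the definition of $f$.
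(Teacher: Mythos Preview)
Your proof is correct and follows essentially the same approach as the paper: set $\alpha=1/\beta$, invoke Corollary~\ref{C:technicallowerbound}, and verify algebraically that the limiting ratios match $(f(\beta),\beta)$. The paper packages the algebra slightly differently (first checking $\alpha\log 4\cdot f(\beta)=\max(\tfrac{1+\alpha}{2}\log 4,\log 3)$ and then reading off the ratios), but the substance and the case analysis are identical.
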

\begin{proof}
Let $\alpha=1/\beta$, and set $k=\lfloor\alpha n\rfloor$. Then
\begin{align*}
\alpha\log 4f(\beta)
&=\alpha\log 4\max\left\{\frac12(\beta+1),(\log_4 3)\beta\right\}\\
&=\alpha\log 4\max\left\{\frac{1+\alpha}{2\alpha},\frac{\log 3}{\alpha\log 4}\right\}\\
&=\max\set{\frac{1+\alpha}2\log 4,\log 3}.
\end{align*}
Thus Corollary~\ref{C:technicallowerbound} provides sets $A_{n,k}$ with
\begin{align*}
\frac{\log|A_{n,k}|}n&\to\alpha\log 4\\
\frac{\log|A_{n,k}+A_{n,k}|}n
&\to\alpha f(\beta)\log 4\\
\frac{\log|A_{n,k}+2\cdot A_{n,k}|}n
&\to \alpha\beta\log 4,
\end{align*}
proving the result.
\end{proof}

\noindent A simple change of variable shows that the graph of the piecewise-linear function
\[
y=\min\left(2x-1,(\log_34)x\right)=
\begin{cases}
2x-1&1\le x\le\log_{\frac94}3=1.3548\ldots\\
(\log_34)x&\log_{\frac94}3\le x\le 2\\
\end{cases}
\]
is entirely contained in $F_{1,2}$.

As mentioned above, this gives a partial converse to Theorem~\ref{thm:hp1}.

\begin{cor}
For all $\epsilon>0$, there exist sets $S$ and numbers $K>1$ with
$|S+S|\le K|S|$ but with $|S+2\cdot S|>K^{2-\epsilon}|S|.$
\end{cor}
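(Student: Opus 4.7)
The plan is to derive the corollary as a direct consequence of Corollary~\ref{C:technicallowerbound}. Given $\epsilon > 0$, I would first fix any $\alpha$ in the open interval $(\log_2 3 - 1,\, 1)$; this is the upper half of $(\tfrac12,1)$, and (as noted in the paragraph immediately preceding the corollary) it is precisely the regime in which the limit of $\tfrac1n\log|A_{n,\lfloor\alpha n\rfloor}+A_{n,\lfloor\alpha n\rfloor}|$ equals the Hypercube--Interval term $\tfrac{1+\alpha}{2}\log 4$ rather than the Hypercube--Hypercube term $\log 3$.

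Next I would set $S_n := A_{n,\lfloor\alpha n\rfloor}$ and $K_n := |S_n+S_n|/|S_n|$, so that the first required inequality $|S_n+S_n|\le K_n|S_n|$ holds automatically (with equality). Corollary~\ref{C:technicallowerbound} then gives
\[
\frac{\log|S_n|}{n}\to\alpha\log 4,\qquad
\frac{\log|S_n+S_n|}{n}\to\tfrac{1+\alpha}{2}\log 4,\qquad
\frac{\log|S_n+2\cdot S_n|}{n}\to\log 4,
\]
and subtraction yields $\tfrac1n\log K_n\to\tfrac{1-\alpha}{2}\log 4>0$ while $\tfrac1n\log(|S_n+2\cdot S_n|/|S_n|)\to(1-\alpha)\log 4$, which is exactly twice the first limit. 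Hence $K_n\to\infty$ and
\[
\frac{\log(|S_n+2\cdot S_n|/|S_n|)}{\log K_n}\longrightarrow 2.
\]

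To finish, given $\epsilon>0$ I would choose $n$ large enough that both $K_n>1$ and the ratio displayed above exceeds $2-\epsilon$; then $S := S_n$ and $K := K_n$ satisfy the conclusion of the corollary. There is no serious obstacle here, since the combinatorial content already sits inside Theorem~\ref{T:sumsets} and Corollary~\ref{C:technicallowerbound}. The only subtle point is the choice of $\alpha$ in the upper regime $\alpha > \log_2 3 - 1$: in the complementary regime, replacing $\tfrac{1+\alpha}{2}\log 4$ by $\log 3$ in the computation above gives limiting ratio $(1-\alpha)\log 4/(\log 3 - \alpha\log 4)$, which is strictly less than $2$, so the Hypercube+Interval construction would not yield an exponent tending to $2$ there.
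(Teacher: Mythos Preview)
Your proposal is correct and follows essentially the same route as the paper: both derive the corollary directly from Corollary~\ref{C:technicallowerbound} by computing
\[
\frac{\log|A_{n,\lfloor\alpha n\rfloor}+2\cdot A_{n,\lfloor\alpha n\rfloor}|-\log|A_{n,\lfloor\alpha n\rfloor}|}{\log|A_{n,\lfloor\alpha n\rfloor}+A_{n,\lfloor\alpha n\rfloor}|-\log|A_{n,\lfloor\alpha n\rfloor}|}\longrightarrow\frac{(1-\alpha)\log 4}{\tfrac{1-\alpha}{2}\log 4}=2.
\]
Your choice of regime $\alpha\in(\log_2 3-1,1)=(2\tfrac{\log 3}{\log 4}-1,1)$ is exactly the one in which the Hypercube--Interval term $\tfrac{1+\alpha}{2}\log 4$ dominates $\log 3$, so that the denominator above is indeed $\tfrac{1-\alpha}{2}\log 4$; the paper's printed range $\tfrac12\le\alpha\le 2\tfrac{\log3}{\log4}-1$ appears to have the inequality reversed (its displayed computation uses the upper-regime formula), so your version is if anything the more careful of the two.
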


\begin{proof}
Take $\alpha>\log3/\log 2-1$ in Corollary~\ref{C:technicallowerbound}.
\end{proof}

Let us quickly discuss a lower bound on the feasible region.

\begin{thm}\label{T:upperfromplunecke}
For all sets $A$, $|A||A+A|\le |A+2\cdot A|^2$.	
\end{thm}

\begin{proof}
Corollary 7.3.6 of~\cite{Zhao}, which is an easy conseqeunce of Pl\"unnecke's inequality,
states that for any three sets $A,B,C$,
\[
|A||B+C| \le |A+B||A+C|.
\]
Setting $B=C=2\cdot A$ gives
\[
|A||A+A| = |A||2\cdot A+2\cdot A| \le |A+2\cdot A|^2.
\]
\end{proof}

These results are all illustrated in Figure~\ref{fig:feas}. First, the two results
of Hanson and Petrridis (Theorems~\ref{thm:hp1} and~\ref{thm:hp2}) show that the regions
$y>2.95x-1.95$ and $y>4x/3$ are both {\em infeasible} (i.e., none of the points in those
regions is attainable). Likewise, Theorem~\ref{T:upperfromplunecke} shows that the region
$y<1+x/2$ is also infeasible. In the other direction, Corollary~\ref{C:feas12} shows that
the lines $OD$ and $DC$ are feasible, while Theorem~\ref{T:basicbutnecessary} shows that
the line $OE$ is feasible. Consequently, the entire quadrilateral $ODCE$ is feasible.

\begin{figure}[h]
\begin{center}
	\includegraphics[scale=.3]{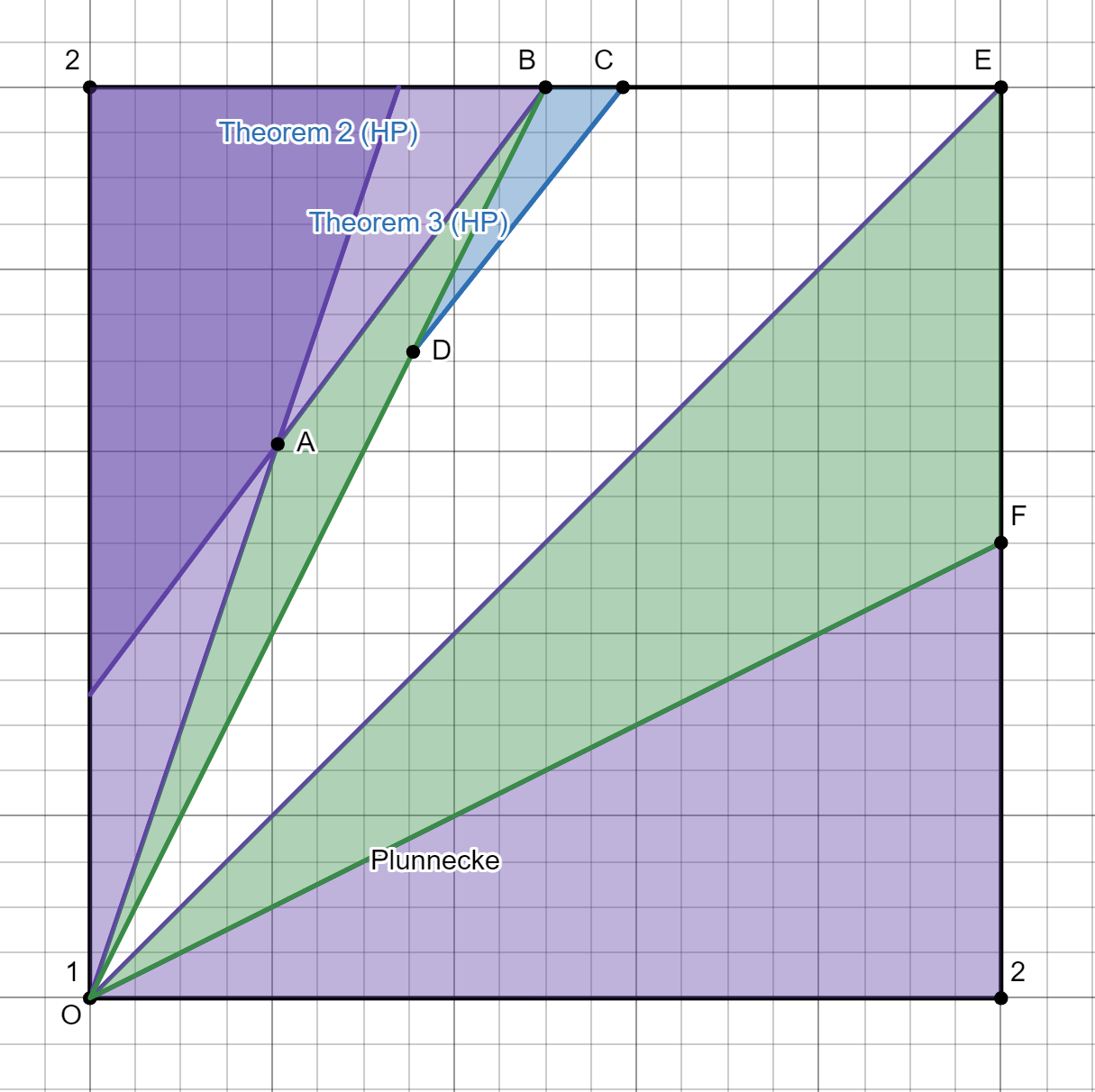}
\end{center}
	\caption{The feasible region $F_{1,2}$}\label{fig:feas}
\end{figure}

These results leave three regions unexplored: triangles $OAB, BCD$ and $OEF$. More
precisely, we can ask three questions:

\begin{qn}\label{Q:2 is king}
Is (the interior of) triangle $OAB$ infeasible? In other
words, does $|A+A|=|A|^{1+t}$ imply $|A+2\cdot A|\le|A|^{1+2t}$? Alternatively,
is it true that, for all $A$, $|A||A+2\cdot A|\le|A+A||A+A|$?
\end{qn}

\begin{qn}\label{Q:log 4 over log 3}
Are there no attainable points above the extension of line $CD$? In other words, is it
true that, if $|A+A|=3^t$, then $|A+2\cdot A|\le 4^t$?
\end{qn}

\begin{qn}\label{Q:mst2d}
Is (the interior of) triangle $OEF$ infeasible? In other words,
is it true that, for all $A$, $|A+2\cdot A|\ge|A+A|$?	
\end{qn}

We coin the term {\em MST2D sets} (or {\em more sums than 2-dilates sets}) for
counterexamples to Question~\ref{Q:mst2d}.  A natural candidate for an MST2D set is
a Sidon set (that is, a set for which $|A+A|$ is as large as possible).
However, one can easily show that a Sidon set cannot be an MST2D set.

\begin{lem}
If $A$ is a Sidon set with at least two elements, then $|A+2\cdot A|>|A+A|$.	
\end{lem}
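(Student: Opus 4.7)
The plan is to express $A+2\cdot A$ as a union of $n:=|A|$ translates of $A$, one per element of $A$, namely $A+2\cdot A=\bigcup_{a\in A}(A+2a)$, and estimate from below by Bonferroni's inequality
$$|A+2\cdot A|\;\ge\;\sum_{a\in A}|A+2a|-\sum_{\{a,a'\}\subseteq A,\,a\ne a'}|(A+2a)\cap(A+2a')|\;=\;n^2-M,$$
where $M$ denotes the total size of pairwise intersections (each unordered pair counted once, since each translate has size $n$).

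The Sidon hypothesis then controls $M$ in two ways. First, an element of $(A+2a)\cap(A+2a')$ corresponds to $b,c\in A$ with $b-c=2(a'-a)\ne 0$, and by the Sidon property such a representation is unique; hence every pairwise intersection has size $0$ or $1$, and $M$ equals the number of unordered pairs $\{a,a'\}$ with $2(a'-a)\in A-A$. Second, using Sidon once more, each positive $y\in A-A$ arises from exactly one such unordered pair (via $y=a'-a$), so $M$ equals the number of positive $y\in A-A$ with $2y\in A-A$. Since $A$ is Sidon, $|A-A|=n^2-n+1$, so $A-A$ has exactly $\binom{n}{2}$ positive elements; but the largest, $y^\ast=\max A-\min A$, satisfies $2y^\ast>y^\ast$, so $2y^\ast\notin A-A$. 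Therefore $M\le\binom{n}{2}-1$, and
$$|A+2\cdot A|\;\ge\;n^2-\binom{n}{2}+1\;=\;\frac{n(n+1)}{2}+1\;=\;|A+A|+1,$$
as required.

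The main obstacle I expect is simply finding the correct quantitative bound. The natural try via Cauchy--Schwarz on the additive energy $E'(A)=|\{(a,b,c,d)\in A^4:a+2b=c+2d\}|$ gives $E'(A)\le 2n^2-n$ by Sidon, and hence $|A+2\cdot A|\ge n^4/E'(A)\ge n^3/(2n-1)$, which is strictly smaller than $|A+A|=n(n+1)/2$ for every $n\ge 2$ and therefore fails to prove the claim. The Bonferroni argument succeeds precisely because one only needs to beat $|A+A|$ by a single unit, and that unit is contributed by the extremal difference $y^\ast$ alone.
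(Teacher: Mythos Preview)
Your proof is correct and takes a genuinely different route from the paper's. The paper argues via the energy-type inequality $|A+2\cdot A|\ge n^4/E'(A)$ (equivalently, $\Pr(X_1=X_2)\ge 1/|S|$ for i.i.d.\ $X_i$ on $S$), computes $E'(A)=n^2+K$ where $K$ is the number of nonzero $k\in A-A$ with $2k\in A-A$, and then bounds $K\le n^2-3n+2$ by a parity argument after normalising so that $0\in A$ and $\gcd A=1$. Your Bonferroni decomposition $A+2\cdot A=\bigcup_{a\in A}(A+2a)$ sidesteps all of this: you only need the single observation that the maximum positive difference $y^\ast=\max A-\min A$ satisfies $2y^\ast\notin A-A$, which already forces $M\le\binom{n}{2}-1$ and hence $|A+2\cdot A|\ge|A+A|+1$. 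Your argument is shorter, requires no normalisation, and yields the clean ``$+1$'' conclusion directly. One small correction to your commentary: the energy approach does not actually fail---as the paper shows, it succeeds once one uses the sharper parity bound $K\le(n-1)(n-2)$ rather than the trivial $K\le n^2-n$; what is true is that your Bonferroni route is more forgiving and works with the weaker input. In exchange, the paper's bound $|A+2\cdot A|\ge n^4/(2n^2-3n+2)\sim n^2/2+3n/4$ is asymptotically about $n/4$ stronger than your $\binom{n+1}{2}+1$.
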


\begin{proof}
Adding and multiplying non-zero constants to $A$ does not change $|A+A|$ or $|A+2\cdot A|$.
Thus we can assume that
$0\in A$ and $\gcd(A)=1$.

Let $n=|A|$. If $X_1, X_2$ are independent and identically distributed (IID) random variables,
drawn from any distribution on a finite set $S$, then $\Prb(X_1=X_2)\ge 1/|S|$. Thus if
$A_1, A_2, A_3, A_4$ are IID drawn from the uniform (or indeed any) distribution on $A$,
then
\[
|A+2\cdot A|\ge \Prb(A_1+2\cdot A_2=A_3+2\cdot A_4)^{-1}.
\]
Now, since $A$ is a Sidon set,
\[
\Prb(A_4-A_2=k)=\Prb(A_1-A_3=k)=\begin{cases}
1/n, \textrm{ if }k=0\\
0, \textrm{ if }k\notin A-A\\
1/n^2, \textrm{ otherwise}.
\end{cases}
\]
Thus
\[
\Prb(A_1+2\cdot A_2=A_3+2\cdot A_4)=\Prb(A_1-A_3=2(A_4-A_2))=\frac{1}{n^2}+\frac{K}{n^4},
\]
where $K$ is the number of
non-zero elements of $A-A$ which are double some other element of $A-A$.
Now there are $n^2-n$ non-zero elements of $A-A$, but $A$ contains elements of both parities
(since $0\in A$ and $\gcd(A)=1$). Therefore, $A-A$ contains at least $2(n-1)$ odd elements,
so $K\le n^2-3n+2$.
Consequently,
\begin{align*}
|A+2\cdot A|&\ge\left(\frac{1}{n^2}+\frac{n^2-3n+2}{n^4}\right)^{-1}\\
&=\frac{n^2}{2}\left(1-\frac{3}{2n}+\frac{1}{n^2}\right)^{-1}\\
&\ge\frac{n^2}{2}\left(1+\frac{3}{2n}-\frac{1}{n^2}\right)\\
&=\frac{n^2}{2}+\frac{3n}{4}-\frac12\ge\frac{n^2+n}{2}=|A+A|.
\end{align*}
\end{proof}

Similarly, a natural candidate for a counterexample to Question~\ref{Q:log 4 over log 3} is
a ``2-Sidon" set (i.e., one where $|A+2\cdot A|$ is as large as possible). An easy way to construct such sets is
as subsets of the hypercube $\{0, 1\}^n$. But it follows from Theorem 2.3 of~\cite{Green} that such sets in fact satisfy the
condition of Question~\ref{Q:log 4 over log 3}.

\begin{lem}\label{L:Woodall}[Green \cite{Green}]
Suppose $A$ and $B$ are subsets of the hypercube $\{0, 1\}^n$. Then
\[
|A+2\cdot B|=|A||B|\le|A+B|^p,
\]
where $p=\log 4/\log 3$.	
\end{lem}

\noindent This lemma has an interesting history, going back to the 1970s. Details are in Appendix B of~\cite{Green}.

Very recently, Becker, Ivanisvili, Krachun and Madrid~\cite{Becker} proved that subsets of $\{0, 1\}^n$ also satisfy
the conditions of Question~\ref{Q:2 is king}; however, since in their result $|A+2\cdot A|=|A|^2$, their Corollary 2
is also a corollary of Lemma~\ref{L:Woodall} above.

\begin{lem}[Becker et al. \cite{Becker}, Corollary 2]
Suppose $A$ is a subset of the hypercube $\{0, 1\}^n$. Then
\[
\frac{|A+2\cdot A|}{|A|}\le \left(\frac{|A+A|}{|A|}\right)^q,
\]	
where $q=\log 2/\log(3/2)$.
\end{lem}

\section{Fractional Dilates}\label{S:Dilates 1}

\subsection{General results on norms}

We remind the reader of the following definitions. A {\em fractional dilate $\gamma$}
is a map $\gamma:\Z\to\R^{+}\cup\{0\}$ with finite support ${\rm supp}(\gamma)$.
We define the {\em size of a fractional dilate} to be
\[
\norm{\gamma}=\inf_{0\le p\le1}\sum_{n\in{\rm supp}(\gamma)}\gamma(n)^p.
\]
A {\em fractional set} is a fractional dilate $\alpha$ for which
$\alpha(n)\le 1$ for all $n\in \Z$. Finally, we describe
a fractional dilate as being {\em opulent}, {\em spartan} or {\em p-comfortable}
if the above infimum is attained at $p=0$, $p=1$ or $0<p<1$ respectively, so that,
for instance, all fractional sets are spartan.

First, we give a simple characterisation of fractional dilates, which enables us
to easily decide whether a fractional dilate is opulent, spartan or $p$-comfortable.

\begin{thm}\label{T:fractionalsize}
A fractional dilate $\gamma$ with support $S={\rm supp}(\gamma)$ is
\[\begin{cases}
\text{spartan,} & \text{if $\sum_{n\in S}\gamma(n)\log\gamma(n)\le 0$}\\
\text{opulent,} & \text{if $\sum_{n\in S}\log\gamma(n)\ge 0$}\\
\text{$p$-comfortable,} & \text{if $\sum_{n\in S}\gamma(n)^p\log\gamma(n)=0.$}
\end{cases}\]
\end{thm}
\begin{proof}
For a fixed $\gamma$ with support $S$, define a function $f:[0,1]\to\R$ by
\[
f(p)=\sum_{n\in S}\gamma(n)^p,
\]
so that $\norm{\gamma}=\inf_{0\le p\le 1}f(p)$. $f$ is twice-differentiable, and also strictly convex, since
\[
f''(p)=\sum_{n\in S}(\log\gamma(n))^2\gamma(n)^p>0.
\]

\medskip

\noindent Suppose first that
\[
f'(1)=\sum_{n\in S}\gamma(n)\log\gamma(n)\le 0.
\]
Then we must have $f'(p)\le 0$ for all $0<p<1$, and hence $\norm{\gamma}=f(1)$, i.e., $\gamma$ is spartan.

\medskip

\noindent Suppose next that
\[
f'(0)=\sum_{n\in S}\log\gamma(n)\ge 0.
\]
Then we must have $f'(p)\ge 0$ for all $0<p<1$, and hence $\norm{\gamma}=f(0)$, i.e., $\gamma$ is opulent.

\medskip

\noindent Otherwise, $f'(0)<0<f'(1)$, and hence there is a unique $p\in(0,1)$ for which
\[
f'(p)=\sum_{n\in S}\gamma(n)^p\log\gamma(n)=0.
\]
It follows that $\norm{\gamma}=f(p)$, i.e., $\gamma$ is $p$-comfortable.
\end{proof}

Next we give yet another characterisation of $\norm{\gamma}$. Recall that, for positive numbers $y_1, \ldots, y_n$
summing to 1, the entropy function $H(y_1, \ldots, y_n)$
is defined to be
\[
H(y_1, \ldots, y_n)=-\sum_{i=1}^n y_i\log_2 y_i.
\]
Gibbs' inequality (see for instance~\cite{GibbsInequality}) states that
\[
H(y_1, \ldots, y_n)\le-\sum y_i\log_2z_i
\]
for any sequence of positive $z_i$ summing to $1$, with equality if and
only if $y_i=z_i$ for all $i$.

\begin{lem}\label{C:Entropy And Scale of Norm}
Suppose $\gamma$ is a fractional dilate with support
$S=\{s_1, \ldots, s_n\}$. Then
\[\norm{\gamma}=\max_{y_1+\cdots+y_n=1}2^{H(y_1,\ldots,y_n)}
\min\set{1,\prod_{i=1}^n\gamma(s_i)^{y_i}}.
\]
\end{lem}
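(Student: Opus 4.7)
My plan is to establish the identity by combining a single weighted AM--GM step with a case analysis matching the three regimes of Theorem~\ref{T:fractionalsize}. Write $a_i=\gamma(s_i)>0$ and regard $y=(y_1,\ldots,y_n)$ as a point in the probability simplex. I will use the entropy identity $2^{H(y)}=\prod_i y_i^{-y_i}$ (which is valid regardless of the logarithm base in $H$). The starting inequality is weighted AM--GM applied to $x_i=a_i^p/y_i$, namely
\[
\sum_{i=1}^n a_i^p\;=\;\sum_{i=1}^n y_i\,\frac{a_i^p}{y_i}\;\ge\;\prod_{i=1}^n\Bigl(\frac{a_i^p}{y_i}\Bigr)^{y_i}\;=\;2^{H(y)}\Bigl(\prod_{i=1}^n a_i^{y_i}\Bigr)^{p},
\]
valid for every $p\in(0,1)$ and every $y$ in the simplex.

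Writing $C(y)=\prod_i a_i^{y_i}$, I will deduce the ``$\ge$'' direction of the lemma by taking the infimum over $p$ of both sides. Since $p\mapsto C(y)^p$ is monotone on $(0,1)$, its infimum equals $C(y)$ when $C(y)\le 1$ (as $p\to 1$) and equals $1$ when $C(y)\ge 1$ (as $p\to 0$). Both regimes give $\norm{\gamma}\ge 2^{H(y)}\min(1,C(y))$, and maximising over $y$ yields one inequality.

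For the matching upper bound on $\norm\gamma$, I will exhibit an explicit maximiser $y^*$ in each of the three cases of Theorem~\ref{T:fractionalsize}. In the spartan case, I will take $y_i^*=a_i/\sum_j a_j$; the telescoping computation $\prod_i(a_i/y_i^*)^{y_i^*}=\prod_i(\sum_j a_j)^{y_i^*}=\sum_j a_j$ combines with the spartan hypothesis $\sum_i y_i^*\log a_i=(\sum_i a_i\log a_i)/\sum_j a_j<0$ (so $C(y^*)<1$) to give $2^{H(y^*)}C(y^*)=\sum_j a_j=\norm\gamma$. In the opulent case, I will take $y_i^*=1/n$, producing $2^{H(y^*)}=n=\norm\gamma$ while opulence guarantees $C(y^*)=(\prod_j a_j)^{1/n}\ge 1$, so the $\min$ equals $1$. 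In the $p$-comfortable case with $\norm\gamma=\sum_i a_i^p$, I will take $y_i^*=a_i^p/\norm\gamma$; the defining relation $\sum_i a_i^p\log a_i=0$ collapses $C(y^*)$ to $1$, and expanding
\[
-\sum_i y_i^*\log y_i^*\;=\;-\frac1{\norm\gamma}\sum_i a_i^p\bigl(p\log a_i-\log\norm\gamma\bigr)\;=\;\log\norm\gamma
\]
yields $2^{H(y^*)}=\norm\gamma$.

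The only delicate point will be verifying that the three cases of Theorem~\ref{T:fractionalsize} cleanly partition the behaviour of the optimisation and that the boundary regimes (such as $\sum_i a_i\log a_i=0$) cause no trouble; these reduce to continuity and the strict convexity of $p\mapsto\sum a_i^p$ already exploited in that theorem. Everything else rests on the single AM--GM inequality above, which is what converts $\norm\gamma$ into the advertised entropy-weighted maximum.
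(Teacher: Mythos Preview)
Your proof is correct and essentially the same as the paper's: your weighted AM--GM step is Gibbs' inequality in disguise (applied with $z_i=a_i^p/\sum_j a_j^p$), and your explicit maximisers $y^*$ in the spartan, opulent, and $p$-comfortable regimes coincide with the paper's equality analysis. One small aside is inaccurate---the identity $2^{H(y)}=\prod_i y_i^{-y_i}$ \emph{does} depend on $H$ being base-$2$, which is how the paper defines it---but this does not affect the argument.
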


\begin{proof}
First we observe that, for real $x$ and $0\le p\le 1$, we have
$\min\set{0, x}\le px$, with equality exactly when either
\begin{enumerate}
\item $p=0$ and $x\ge 0$,
\item $p=1$ and $x\le 0$, or
\item $0<p<1$ and $x=0$.
\end{enumerate}

\noindent Next, fix $0\le p\le 1$, and let
\[
z_i=\frac{\gamma(s_i)^p}{\sum_{i}\gamma(s_i)^p}.
\]
Clearly $\sum z_i=1$. Then, for all positive $y_i$ summing to 1,
we have
\begin{align*}
H(y_1,\ldots,y_n)+\min\set{0,\sum_iy_i\log_2\gamma(s_i)}
&\le H(y_1,\ldots,y_n)+\sum_i py_i\log_2\gamma(s_i)\\
&=H(y_1,\ldots,y_n)+\sum_i y_i\log_2\gamma(s_i)^p\\
&=H(y_1,\ldots,y_n)+\sum_i y_i\left(\log_2z_i+\log_2\sum_{i}\gamma(s_i)^p\right)\\
&=H(y_1,\ldots,y_n)+\sum_i y_i\log_2z_i+\log_2\sum_{i}\gamma(s_i)^p\\
&\le\log_2\sum_{i}\gamma(s_i)^p,
\end{align*}
with the last inequality being Gibbs' inequality.

Raising 2 to both sides shows that, for all positive $y_1,\ldots,y_n$ summing to 1, and all $0\le p\le 1$,
\begin{equation}\label{minimax}
2^{H(y_1,\ldots,y_n)}
\min\set{1,\prod_i \gamma(s_i)^{y_i}}\le\sum_{i}\gamma(s_i)^p.
\end{equation}
To prove the theorem, we only need show that we can choose the $y_i$ and $p$ to achieve
equality in~\eqref{minimax}. For this, revisiting the derivation of~\eqref{minimax}, we require
that $y_i=z_i$ for all $i$ and also that
\[
\min\set{0,\sum_iy_i\log_2\gamma(s_i)}=\sum_i py_i\log_2\gamma(s_i).
\]
From the definition of $z_i$, this means we require exactly that
\[
p\sum_i\gamma(s_i)^p\log\gamma(s_i)=\min\set{0,\sum_i\gamma(s_i)^p\log\gamma(s_i)}.
\]
As discussed at the start of this proof, this holds exactly when
\begin{enumerate}
\item $p=0$ and $\sum_i\log\gamma(s_i)\ge 0$, i.e., when $\gamma$ is opulent,
\item $p=1$ and $\sum_i\gamma(s_i)\log\gamma(s_i)\le 0$, i.e., when $\gamma$ is spartan, or
\item $0<p<1$ and $\sum_i\gamma(s_i)^p\log\gamma(s_i)=0$, i.e., when $\gamma$ is $p$-comfortable;
\end{enumerate}
here, we have also used Theorem~\ref{T:fractionalsize}.
Consequently, we can indeed achieve equality in~\eqref{minimax}, so the theorem is proved.
\end{proof}	

\subsection{Results for two sets}

We recall some more definitions from the introduction. Given a fractional set $\alpha$,
we say that a random set $S_n\subseteq\Z^{n}$ is drawn from $\alpha^n$ if each element of
$\Z^{n}$ is chosen independently, and the probability that $(i_1, i_2, \ldots, i_n)$ is selected is
$\alpha(i_1)\alpha(i_2)\ldots\alpha(i_n)$. Moreover, for fractional sets $\alpha, \beta$ and an
integer $k$, let $\alpha+k\cdot\beta$ denote the fractional dilate defined by the formula
\[
(\alpha+k\cdot\beta)(n)=\sum_{\substack{(i,j)\\i+kj=n}}\alpha(i)\beta(j).
\]

\noindent First we prove two simple lemmas that we will use repeatedly. In the proof of both
we use the fact that fractional sets are spartan.

\begin{lem}\label{L:Product}
Suppose $\alpha$ and $\beta$ are fractional sets, and $\gamma=\alpha+\beta$ is spartan.
Then
\[
\norm{\gamma}=\norm{\alpha}\norm{\beta}.
\]
\end{lem}
\begin{proof}
If $\gamma$ is spartan with support $S$, then
\[
\norm{\gamma}=\sum_{n\in S}\gamma(n)=\sum_{n\in S}\sum_{\substack{(i,j)\\i+j=n}}\alpha(i)\beta(j)
=\sum_{i\in\Z}\alpha(i)\sum_{j\in\Z}\beta(j)=\norm{\alpha}\norm{\beta}.
\]
\end{proof}

\begin{lem}\label{L:expectation}
Let $\alpha$ be a fractional set, and suppose that $S_n\subseteq\Z^{n}$ is drawn from $\alpha^n$.
Then
\[
E|S_n|=\norm{\alpha}^n.
\]
\end{lem}
\begin{proof}
Writing $v=(v_1,\ldots,v_n)\in\Z^n$, we have
\[
\E|S_n|=\sum_{v\in\Z^n}\Prb(v\in S_n)=\sum_{v\in\Z^n}\alpha(v_1)\cdots\alpha(v_n)=\prod_{i=1}^n\sum_{v_i\in Z}\alpha(v_i)=\norm{\alpha}^n.
\]
\end{proof}

Our aim is to prove Theorems~\ref{T:fractional} and~\ref{T:spartan}, which concern just one fractional set $\alpha$
and a single random set $S_n\subseteq \Z^n$ drawn from $\alpha^n$. However, it is easier to start with {\it two}
fractional sets $\alpha$ and $\beta$, and let $S_n, T_n\subseteq\Z^n$ be drawn independently from $\alpha^n$
and $\beta^n$ respectively. In this section, we consider two such sets, and prove the following ``two-set" versions
of Theorems~\ref{T:fractional} and~\ref{T:spartan}.

\begin{thm}\label{T:fractional two}
Let $\alpha$ and $\beta$ be fractional sets, and suppose
$S_n, T_n\subseteq\Z^{n}$ are drawn from $\alpha^n$ and $\beta^n$ respectively.
Then
\[
\lim_{n\to\infty}(\E|S_n+k\cdot T_n|)^{1/n}=\norm{\alpha+k\cdot\beta}.
\]
\end{thm}
\begin{thm}\label{T:spartan_two}
Let $\alpha$ and $\beta$ be fractional sets, and suppose
$S_n, T_n\subseteq\Z^{n}$ are drawn from $\alpha^n$ and $\beta^n$ respectively.
If $\gamma=\alpha+k\cdot\beta$ is strictly spartan, in the sense that
\[
\sum_{n\in{\rm supp}(\gamma)}\gamma(n)\log\gamma(n)<0,
\]
then with high probability
\[
|S_n+k\cdot T_n|\ge\tfrac12|S_n||T_n|.
\]
\end{thm}

Let us first prove the upper bound in Theorem~\ref{T:fractional two}. For this, we require
a definition. For two sets $A,B\subseteq \Z^n$, the {\em multiplicity} $\mult_{A+k\cdot B}(x)$ of
$x$ in $A+k\cdot B$ is defined by the formula
\[
\mult_{A+k\cdot B}(x)=|A\cap(x-k\cdot B)|=|\{(a,b):a\in A,b\in B,a+kb=x\}|.
\]
In other words, $\mult_{A+k\cdot B}(x)$ is the number of ways of writing $x=a+kb$ with $a\in A$ and $b\in B$.

By replacing $k\cdot\beta$ by $\beta$ in Theorem~\ref{T:fractional two}, it is enough to prove the theorem for $k=1$.
In the rest of this subsection, we will make this simplification.

\begin{thm}\label{T:fractional2setupper}
Let $\alpha$ and $\beta$ be fractional sets, suppose
$S_n, T_n\subseteq\Z^{n}$ are drawn from $\alpha^n$ and $\beta^n$ respectively,
and let $\gamma=\alpha+\beta$.
Then
\[
\E|S_n+T_n|\le\norm{\gamma}^n.
\]
\end{thm}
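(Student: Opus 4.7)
The plan is to bound $\E|S_n+T_n|$ directly via a union bound, and then convert that bound into an expression matching the definition $\|\gamma\| = \inf_{0<p<1}\sum_x \gamma(x)^p$.

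First I would write
\[
\E|S_n+T_n| = \sum_{\mathbf{x} \in \Z^n} \Pr(\mathbf{x} \in S_n+T_n).
\]
For each fixed $\mathbf{x} = (x_1,\ldots,x_n)$, I would bound $\Pr(\mathbf{x} \in S_n+T_n)$ by a union bound over the representations $\mathbf{x} = \mathbf{i} + \mathbf{j}$ with $\mathbf{i},\mathbf{j} \in \Z^n$. Since $S_n$ and $T_n$ are independent and each coordinate is selected independently, the event $\{\mathbf{i} \in S_n, \mathbf{j} \in T_n\}$ has probability $\prod_k \alpha(i_k)\beta(j_k)$. Summing over all representations and factoring coordinate-by-coordinate gives
\[
\Pr(\mathbf{x} \in S_n+T_n) \;\le\; \sum_{\mathbf{i}+\mathbf{j}=\mathbf{x}} \prod_{k=1}^n \alpha(i_k)\beta(j_k) \;=\; \prod_{k=1}^n \sum_{i+j=x_k} \alpha(i)\beta(j) \;=\; \prod_{k=1}^n \gamma(x_k),
\]
where $\gamma = \alpha+\beta$. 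Of course, we also have $\Pr(\mathbf{x} \in S_n+T_n) \le 1$ trivially, so $\Pr(\mathbf{x}\in S_n+T_n) \le \min\bigl(1, \prod_k \gamma(x_k)\bigr)$.

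Now I would invoke the elementary inequality $\min(1,y) \le y^p$, valid for any $y \ge 0$ and $0 < p < 1$ (split into the cases $y \le 1$ and $y \ge 1$). Applying this with $y = \prod_k \gamma(x_k)$ and summing over $\mathbf{x} \in \Z^n$, the sum factors as
\[
\E|S_n+T_n| \;\le\; \sum_{\mathbf{x} \in \Z^n} \prod_{k=1}^n \gamma(x_k)^p \;=\; \left(\sum_{x \in \Z} \gamma(x)^p\right)^{\!n}.
\]
Since this holds for every $p \in (0,1)$, taking the infimum over $p$ on the right yields $\E|S_n+T_n| \le \|\gamma\|^n$, as required.

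There really is no serious obstacle here: the argument is a union bound followed by matching the definition of $\|\gamma\|$. The only thing to double-check is that the min-versus-power inequality and the coordinate-wise factorization interact cleanly, but both steps are completely routine. All the difficulty of Theorem~\ref{T:fractional} lies in the matching lower bound and the sharper estimates when $\gamma$ is spartan, not in this upper bound.
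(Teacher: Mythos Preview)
Your argument is correct and is essentially the same as the paper's own proof: the paper phrases the union bound as $\Pr(\mult_{S_n+T_n}(\mathbf{x})>0)\le\min(1,\E\,\mult_{S_n+T_n}(\mathbf{x}))\le(\E\,\mult_{S_n+T_n}(\mathbf{x}))^p$, computes that expected multiplicity as $\prod_k\gamma(x_k)$, sums over $\mathbf{x}$, and factors---exactly your steps in slightly different language.
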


\begin{proof}
Let $X$ be the support of $\gamma$. Then the possible elements of $S_n+T_n$
are the elements of $X^n$. Given a particular element $x\in X^n$, we have,
for all $0\le p\le 1$,
\begin{align*}
\Prb(x\in S_n+T_n)&=\Prb(\mult_{S_n+T_n}(x)>0)\\
&\le\min\{1,\E(\mult_{S_n+T_n}(x))\}\\
&\le(\E(\mult_{S_n+T_n}(x)))^p.
\end{align*}

\noindent Now for $x=(x_1, \ldots, x_n)\in X^n$, we have
\begin{align*}
\E(\mult_{S_n+T_n}(x))
&=\sum_{\substack{z_1+y_1=x_1\\ \vdots\\ z_n+y_n=x_n}}
\alpha(z_1)\cdots\alpha(z_n)\beta(y_1)\cdots\beta(y_n)
\\
&=\left(\sum_{z_1+y_1=x_1}\alpha(z_1)\beta(y_1)\right)\cdots
\left(\sum_{z_n+y_n=x_n}\alpha(z_n)\beta(y_n)\right)\\
&=\gamma(x_1)\gamma(x_2)\cdots\gamma(x_n).
\end{align*}

\noindent It follows that
\begin{align*}
\E|S_n+T_n|
&=\sum_{x\in X^n}\Prb(x\in S_n+T_n)\\
&\le\sum_{x\in X^n}(\E(\mult_{S_n+T_n}(x)))^p\\
&=\sum_{(x_1,\ldots,x_n)\in X^n}\gamma(x_1)^p\gamma(x_2)^p\cdots\gamma(x_n)^p\\
&=\left(\sum_{x\in X}\gamma(x)^p\right)^n
\end{align*}
for all $0\le p\le 1$.  Since $\norm\gamma=\inf_{0\le p\le1}\sum_x\gamma(x)^p$, the result follows.
\end{proof}

Next we prove that $\norm{\gamma}$ is a lower bound for the limit. We will require a series of lemmas.

\begin{lem}\label{L:tiny}
Suppose that $X=\sum_{i=1}^NZ_i$, where the $Z_i$ are independent Bernoulli random variables.
Then
\[
\Prb(X>0)\ge \E(X)-\tfrac12\E(X)^2.
\]
\end{lem}
\begin{proof}
We have
\[
\E(X)^2=\sum_{i=1}^N\sum_{j=1}^N\E(Z_i)\E(Z_j)=\sum_{i=1}^N\sum_{j=1}^N\E(Z_iZ_j)\\
\ge 2\sum_{i<j}\E(Z_iZ_j)\\
=2\E\binom{X}{2},
\]
so that, since $n-\binom{n}{2}\le\mathbbm{1}_{n>0}$ for all $n\ge 0$,
\[
\E(X)-\tfrac12\E(X)^2\le\E\left(X-\binom{X}{2}\right)\le\E(\mathbbm{1}_{X>0})=\Prb(X>0).
\]
\end{proof}

\begin{lem}\label{L:technical}
Suppose that $(X_n)_{n=1}^{\infty}$ is a collection of random variables,
each of which can be written as the sum of a finite number of independent Bernoulli random variables.
If
\[
\lim_{n\to\infty}\E(X_n)^{1/n}=t
\]
then
\[
\lim_{n\to\infty}\Prb(X_n>0)^{1/n}=\min(1,t).
\]
\end{lem}

\begin{proof}
Suppose first that $t<1$. Lemma~\ref{L:tiny} implies that
\[
\limsup_{n\to\infty}\left(\E(X_n)-\Prb(X_n>0)\right)^{1/n}\le
\lim_{n\to\infty}\left(\tfrac12\E(X_n)^2\right)^{1/n}=t^2.
\]
Consequently, for all $\epsilon>0$, if $n\ge n_0(\epsilon)$, we have both
\[
(t-\epsilon)^n\le\E(X_n)\le(t+\epsilon)^n
\]
and
\[
\E(X_n)-\Prb(X_n>0)\le (t^2+\epsilon)^n
\]
so that also
\[
(t-2\epsilon)^n\le\Prb(X_n>0)\le(t+\epsilon)^n
\]
which proves that $\Prb(X_n>0)^{1/n}\to t$.

Next suppose that $t\ge 1$. Given any $0<u<1$, write $Y_n=X_nW_n$,
where the $W_i$ are new independent Bernoulli random variables with
$\Prb(W_n=1)=(u/t)^n$. Then
\[
\lim_{n\to\infty}\E(Y_n)^{1/n}=\lim_{n\to\infty}(\E(X_n)\E(W_n))^{1/n}=u,
\]
and so by the above argument
\[
u=\lim_{n\to\infty}\Prb(Y_n>0)^{1/n}\le\liminf_{n\to\infty}\Prb(X_n>0)^{1/n}\le 1.
\]
Since this is true for all $u<1$, we must have $\Prb(X_n>0)^{1/n}\to 1$.
\end{proof}

We can use Lemma~\ref{L:technical} to calculate the asymptotic behaviour of the
probability that a randomly chosen vector lies in $S_n+T_n$.

\begin{cor}\label{C:abittechnical}
Let $\alpha$ and $\beta$ be fractional sets, suppose
$S_n, T_n\subseteq\Z^{n}$ are drawn from $\alpha^n$ and $\beta^n$ respectively,
and let $\gamma=\alpha+\beta$. Fix $N>0$, and suppose that

\medskip

$\bullet$ $x_1, \ldots, x_N\in\Z$

\smallskip

$\bullet$ $y_1, \ldots, y_N\ge 0$ with $\sum y_i=1$

\smallskip

$\bullet$ for each $n$, $z_{1,n}, \ldots, z_{N,n}\in\Z_{\ge0}$ with $\sum_i z_{i,n}=n$ and $z_{i,n}/n\to y_i$ for each $i$

\smallskip

$\bullet$ for each $n$, $v_n\in\Z^n$ is such that $z_{i,n}$ coordinates of $v_n$ are equal to $x_i$.

\medskip

\noindent Then, if
\[
t=\gamma(x_1)^{y_1}\ldots\gamma(x_N)^{y_N}
\]
we have
\[
\lim_{n\to\infty}\Prb(v_n\in S_n+T_n)^{1/n}
=\min\set{1,t}.\]
\end{cor}
\begin{proof}
For a fixed sequence $v_n$, let
\[
X_n=\mult_{S_n+T_n}(v_n)=\sum_{z\in\Z^n}\mathbbm{1}_{z\in S_n,v_n-z\in T_n},
\]
so that each $X_n$ is a sum of a finite number of independent Bernoulli random variables.
As in the proof of Theorem~\ref{T:fractional2setupper},
\[
\E(X_n)=\gamma(x_1)^{z_{1,n}}\gamma(x_2)^{z_{2,n}}\ldots\gamma(x_N)^{z_{N,n}},
\]
so $\E(X_n)^{1/n}\to t$. Applying Lemma~\ref{L:technical}, we get that
\[
\Prb(v_n\in S_n+T_n)^{1/n}=\Prb(X_n>0)\to\min\set{1,t}.
\]
\end{proof}

\noindent The next corollary proves the lower bound on $(\E|S_n+T_n|)^{1/n}$
which, together with Theorem~\ref{T:fractional2setupper}, completes the proof of Theorem~\ref{T:fractional two}.

\begin{cor}\label{C:full result two sets finite support}
Let $\alpha$ and $\beta$ be fractional sets,
suppose
$S_n, T_n\subseteq\Z^{n}$ are drawn from $\alpha^n$ and $\beta^n$ respectively,
and let $\gamma=\alpha+\beta$.
Then
\[
\liminf_{n\to\infty}(\E|S_n+T_n|)^{1/n}\ge\norm{\gamma}.
\]	
\end{cor}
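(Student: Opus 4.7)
The statement combines the upper bound already established in Theorem~\ref{T:fractional2setupper} with a matching lower bound, so my plan is to prove $\liminf_{n\to\infty}(\E|S_n+T_n|)^{1/n} \ge \norm{\gamma}$; together with Theorem~\ref{T:fractional2setupper} this forces the limit to exist and to equal $\norm\gamma$.

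To produce the lower bound I would feed the entropy characterization from Lemma~\ref{C:Entropy And Scale of Norm} into the pointwise asymptotics of Corollary~\ref{C:abittechnical}. Let $\{s_1,\ldots,s_N\}$ be the (finite) support of $\gamma$, and fix a probability vector $(y_1^*,\ldots,y_N^*)$ achieving the maximum of $2^{H(y_1,\ldots,y_N)}\min\bigl(1,\gamma(s_1)^{y_1}\cdots\gamma(s_N)^{y_N}\bigr)$, which by Lemma~\ref{C:Entropy And Scale of Norm} equals $\norm\gamma$. Write $t=\gamma(s_1)^{y_1^*}\cdots\gamma(s_N)^{y_N^*}$ for the corresponding geometric mean.

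For each $n$, choose non-negative integers $z_{1,n},\ldots,z_{N,n}$ summing to $n$ with $z_{i,n}/n\to y_i^*$ (e.g.\ round $ny_i^*$ down and distribute the bounded deficit). Let $V_n\subseteq\Z^n$ consist of all vectors having exactly $z_{i,n}$ coordinates equal to $s_i$ for each $i$; then $|V_n|=\binom{n}{z_{1,n},\ldots,z_{N,n}}$, and the standard multinomial/Stirling estimate gives $|V_n|^{1/n}\to 2^{H(y_1^*,\ldots,y_N^*)}$. Because the joint distribution of $(S_n,T_n)$ is invariant under permuting the $n$ coordinates of $\Z^n$, the probability $p_n:=\Pr(v\in S_n+T_n)$ is the same for every $v\in V_n$. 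Applying Corollary~\ref{C:abittechnical} to any representative of $V_n$ yields $p_n^{1/n}\to\min(1,t)$.

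Since $\E|S_n+T_n|\ge|V_n|\,p_n$, taking $n$-th roots gives
\[
\liminf_{n\to\infty}(\E|S_n+T_n|)^{1/n}\ge 2^{H(y_1^*,\ldots,y_N^*)}\min(1,t)=\norm\gamma,
\]
which combined with Theorem~\ref{T:fractional2setupper} finishes the proof. The only real content here is the matching of the two characterizations of $\norm\gamma$; once Lemma~\ref{C:Entropy And Scale of Norm} supplies the entropy formula, the asymptotics of a type class under a product measure do the rest. I expect the main obstacle, such as it is, to be the symmetry argument identifying $p_n$ across $V_n$ (so that one pointwise application of Corollary~\ref{C:abittechnical} suffices for the entire type class) and the routine but essential multinomial-to-entropy asymptotic; both are standard, but they are what allows the entropy characterization of $\norm\gamma$ to be read off directly from $\E|S_n+T_n|$.
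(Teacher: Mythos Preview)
Your proposal is correct and follows essentially the same argument as the paper: use Theorem~\ref{T:fractional2setupper} for the upper bound, and for the lower bound pick the optimal probability vector from Lemma~\ref{C:Entropy And Scale of Norm}, restrict attention to the corresponding type class $V_n$, invoke Corollary~\ref{C:abittechnical} on a representative (using coordinate-permutation symmetry to identify $p_n$ across $V_n$), and combine with the standard multinomial-to-entropy asymptotic. The paper's proof is terser but structurally identical.
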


\begin{proof}
Since $\alpha$ and $\beta$ have finite support, so does $\gamma$. Let
$S={\rm supp}(\gamma)=\{s_1, \ldots, s_N\}$.
By Lemma~\ref{C:Entropy And Scale of Norm}, there exist non-negative numbers $y_1, \ldots, y_N$
summing to 1 with
\[
\norm{\gamma}=
2^{H(y_1,\ldots,y_N)}\min\set{1,\prod_{i=1}^N \gamma(s_i)^{y_i}}.
\]

For each $n$, choose integers $z_{1,n}\ldots,z_{N,n}$ summing to $n$, and with
$z_i/n\to y_i$ for each $1\le i\le N$. Let $V_n\in\Z^n$ be the set of vectors
with exactly $z_{i,n}$ coordinates equal to $s_i$, for each $1\le i\le N$. Then,
for each $v\in V_n$, Corollary~\ref{C:abittechnical} shows that
\[
\lim_{n\to\infty}\Prb(v\in S_n+T_n)^{1/n}=\min\set{1,\prod_{i=1}^N\gamma(s_i)^{y_i}}.
\]

\noindent It is well known that, abbreviating $z_{i,n}$ to $z_i$,
\[
\lim_{n\to\infty}|V_n|^{1/n}=\lim_{n\to\infty}\binom{n}{z_1,z_2,\ldots,z_N}^{1/n}=2^{H(y_1,\ldots,y_N)}.
\]

\noindent Consequently,
\begin{align*}
\liminf_{n\to\infty}(\E|S_n+T_n|)^{1/n}
&\ge\liminf_{n\to\infty}(\E|(S_n+T_n)\cap V_n|)^{1/n}\\
&=\liminf_{n\to\infty}(|V_n|\cdot\Prb(v\in S_n+T_n|v\in V_n))^{1/n}\\
&=\lim_{n\to\infty}|V_n|^{1/n}\cdot\lim_{n\to\infty}\Prb(v\in S_n+T_n|v\in V_n)^{1/n}\\
&=2^{H(y_1, y_2,\ldots, y_N)}\min\set{1, \prod_{i=1}^N \gamma(s_i)^{y_i}}\\
&=\norm{\gamma}.
\end{align*}
\end{proof}

\noindent With Theorem~\ref{T:fractional two} proved, we turn to Theorem~\ref{T:spartan_two}.

\begin{lem}\label{L:sparta for two sets}
Let $\alpha$ and $\beta$ be fractional sets, and
suppose
$S_n, T_n\subseteq\Z^{n}$ are drawn from $\alpha^n$ and $\beta^n$ respectively,
and let $\gamma=\alpha+\beta$. Suppose that $\gamma$ is strictly spartan, in the sense that
\[
\sum_{n\in{\rm supp}(\gamma)}\gamma(n)\log\gamma(n)<0.
\]
Then
\[
\E(|S_n||T_n|)=\E|S_n|\cdot\E|T_n|=\norm{\alpha}^n\norm{\beta}^n=\norm{\gamma}^n
\]
and
\[
\E(|S_n||T_n|-|S_n+T_n|)=o(\norm{\gamma}^n).
\]
\end{lem}

\begin{proof}
The first part of the conclusion follows from Lemmas~\ref{L:Product} and~\ref{L:expectation}.
For the second part, for $v=(v_1,\ldots,v_n)\in\Z^n$, write
\[
X_v=\E(\mult_{S_n+T_n}(v)).
\]
Since $X_v$ is a sum of independent Bernoulli random variables, Lemma~\ref{L:tiny} shows that
\[
\E(X_v)-\Prb(X_v>0)\le \E(X_v)^2.
\]
Since the left hand side is also at most $\E(X_v)$, it follows that
\[
\E(X_v)-\Prb(X_v>0)\le \E(X_v)^p\textrm{ for all }1\le p\le 2.
\]
Therefore, for all $1\le p\le 2$,
\begin{align*}
\E(|S_n||T_n|-|S_n+T_n|)
&=\sum_v(\E(X_v)-\Prb(X_v>0))\leq \sum_v \E(X_v)^p\\
&=\sum_{v_1, v_2, \ldots, v_n}\gamma(v_1)^p\ldots\gamma(v_n)^p=\left(\sum_{v}\gamma(v)^p\right)^{n}.
\end{align*}
Now $\gamma$ is strictly spartan, so the function $f(p)=\sum_v\gamma(v)^p$ is strictly decreasing on the interval $[0,1+\epsilon]$,
for some $\epsilon>0$. Consequently, for that $\epsilon$, we have $\sum_v\gamma(v)^{1+\epsilon}<\norm\gamma$. Thus
$\E(|S_n||T_n|-|S_n+T_n|)=o(\norm{\gamma}^n)$ as $n\to\infty$.
\end{proof}

\noindent Theorem~\ref{T:spartan_two} follows from Lemma~\ref{L:sparta for two sets} and Markov's inequality.

\subsection{From two sets to one: the rainbow connection}\label{S:Dilates 2}

In this section we will prove Theorems~\ref{T:fractional}
and~\ref{T:spartan}.
Let $\alpha$ be a fractional set, let $k$ be a nonzero integer, and
let $\gamma:=\alpha+k\cdot\alpha$ denote the fractional dilate defined by
$\gamma(n)=\sum_{i+kj=n}\alpha(i)\alpha(j)$.

First we prove two easy lemmas.

\begin{lem}\label{L:variance}
Let $\alpha$ be a fractional set, and suppose that $S_n\subseteq\Z^{n}$ is drawn from $\alpha^n$.
Then
\[
\Var|S_n|\le\norm{\alpha}^n.
\]
\end{lem}
\begin{proof}
$|S_n|$ is the sum of independent Bernoulli random variables $X_i$ with $\Prb(X_i=1)=:p_i$. We have
\[
\Var|S_n|=\sum_{i\in S}\Var X_i
=\sum_{i\in S}p_i(1-p_i)\le\sum_{i\in S}p_i=\E|S_n|=\norm{\alpha}^n,
\]
where the last equality is Lemma~\ref{L:expectation}.
\end{proof}

\begin{lem}\label{L:we do use this}
Let $\alpha$ be a fractional set with $\norm\alpha>1$, and let $\gamma:=\alpha+k\cdot\alpha$
for a fixed nonzero integer $k$. Then $\norm\alpha\le\norm\gamma$.
\end{lem}

\begin{proof}
Let $S_n$ be drawn from $\alpha^n$.
Then $|S_n|$ can be written as the sum of independent Bernoulli random variables,
and so $(\E|S_n|)^{1/n}=\norm\alpha>1$ for all $n$ by Lemma~\ref{L:expectation}.
Thus, by Lemma~\ref{L:technical}, we have
\[
\lim_{n\to\infty}\Pr(|S_n|>0)^{1/n}=1.
\]
Now let $T_n$ be drawn independently from $\alpha^n$.
By Corollary~\ref{C:full result two sets finite support}, we have
\[
\norm{\gamma}=\lim_{n\to\infty}(\E|S_n+k\cdot T_n|)^{1/n}.
\]
But $|S_n+k\cdot T_n|\ge|T_n|$ whenever $S_n$ is non-empty, so
\[
|S_n+k\cdot T_n|\ge|T_n|\cdot\mathbbm{1}_{|S_n|>0}.
\]
Since $|S_n|$ and $|T_n|$ are independent, it follows that
\[
\norm{\gamma}=\lim_{n\to\infty}(\E|S_n+k\cdot T_n|)^{1/n}\ge
\lim_{n\to\infty}(\E|T_n|\cdot\Prb(|S_n|>0))^{1/n}=\norm\alpha.
\]
\end{proof}

We will prove Theorem~\ref{T:fractional}
by comparing the sizes of $S_n+k\cdot S_n$ and $S_n+k\cdot T_n$, where $S_n$ and $T_n$ are drawn
independently from $\alpha^n$. For $\alpha$ and $\gamma$ fixed, with supports $A$ and $\Gamma$ respectively,
we say that a vector $v\in\Z^n$ is {\em rainbow} if its components include at least one copy of each
element of $\Gamma$. Finally, let $R_n$ denote the set of rainbow vectors in $\Z^n$.

The cases $k\ne 1$ and $k=1$ require separate analyses. We treat the case $k\ne 1$ first.

\begin{thm}\label{T:rainbow connection} Fix a fractional set $\alpha$ with $\norm\alpha>1$ and a positive integer $n$,
and let $S_n$ and $T_n$ be drawn independently from $\alpha^n$.
If $k\ne 1$, and $v$ is a rainbow vector for $\gamma=\alpha+k\cdot\alpha$, then
\[
\Prb(v\in S_n+k\cdot S_n)=\Prb(v\in S_n+k\cdot T_n)
\]
and so
\[
\E|(S_n+k\cdot S_n)\cap R_n|=\E|(S_n+k\cdot T_n)\cap R_n|.
\]
\end{thm}

\begin{proof} Recall that $A={\rm supp}(\alpha)$. For a fixed rainbow vector $v\in\Z^n$, write
\[
S=\{x\in\Z^n: x\in A^n, v-kx\in A^n\}=\{x\in\Z^n: \Prb(x\in S_n,v-kx\in S_n)>0\}.
\]
Since $A$ is finite, so is $S$.

We claim that there exist distinct $a,b\in A$ such that the expression
$a+kb$ is unique, i.e., $a+kb$ cannot be expressed in any other way $a'+kb'$, where $a',b'\in A$. Indeed, if $k<0$,
let $a=\max A$ and $b=\min A$. If $a',b'\in A$, then $a\ge a'$ and $b\le b'$, so
$a+kb\ge a'+kb'$, with equality only if $a=a'$ and $b=b'$. Similarly, if $k>1$,
we can take $b=\max A$ and $a=\max (A\setminus\{b\})$. If $a',b'\in A$ with $a'+kb'=a+kb$ but with $a'\ne a$ and $b'\ne b$,
we must have $b'<b$, whence $b'\le a$ and also $a'\le b$,
so that
\[
a'+kb'\le b+ka<b+ka+(k-1)(b-a)=a+kb,
\]
a contradiction.

Now, since $v$ is a rainbow vector, there exists $i$ with $v_i=a+kb$. It follows that for all $x\in S$, $x_i=a$ and $(v-kx)_i=b\ne a$,
so that $v-kx\notin S$ and $x\ne v-kx$. Consequently, the events $x\in S_n$ and $v-kx\in S_n$ are independent, and hence
\[
\Prb(x\in S_n,v-kx\in S_n)=\Prb(x\in S_n)\Prb(v-kx\in S_n).
\]
Furthermore, since the sets $\{x, v-kx\}$ for all $x\in S$ are disjoint, we have by independence
\begin{align*}
\Prb(v\in S_n+S_n)
&=1-\prod_{x\in S}(1-\Prb(x\in S_n,v-kx\in S_n))\\
&=1-\prod_{x\in S}(1-\Prb(x\in S_n)\Prb(v-kx\in S_n))\\
&=1-\prod_{x\in S}(1-\Prb(x\in S_n)\Prb(v-kx\in T_n))\\
&=1-\prod_{x\in S}(1-\Prb(x\in S_n,v-kx\in T_n))\\
&=\Prb(v\in S_n+T_n).
\end{align*}
The statement about expectations follows by summing over all $v\in R_n$.
\end{proof}

For $k=1$, $a+kb=b+ka$, so $S_n+S_n$ will be usually be a lot smaller than $S_n+T_n$. Thus we need to modify our strategy.
Order the elements of $\Z^n$ lexicographically, so that, for $v,v'\in\Z^n$, if $v$ and $v'$ first differ in the $i$th
coordinate, we set $v<v'$ if $v_i<v'_i$. Then, for subsets $U,V\subseteq\Z^n$, we define
\[
U+^<V=\setof{u+v}{u\in U, v\in V, u<v}.
\]

\begin{thm}\label{T:rainbow connection II}
Fix a fractional set $\alpha$ with $\norm\alpha>1$ and a positive integer $n$,
and let $S_n$ and $T_n$ be drawn independently from $\alpha^n$.
If $v$ is a rainbow vector for $\gamma=\alpha+\alpha$, then
\[
\Prb(v\in S_n+S_n)=\Prb(v\in S_n+^<T_n)
\]
and so
\[
\E|(S_n+S_n)\cap R_n|=\E|(S_n+^<T_n)\cap R_n|.
\]
\end{thm}

\begin{proof}
First we argue that, since $v$ is rainbow, $v/2\not\in A^n=({\rm supp}(\alpha))^n$. For let
$a$ and $b$ be the two largest elements of $A$. Then $c=a+b\in\Gamma={\rm supp}(\gamma)$,
so, since $v$ is a rainbow vector, there exists $i$ such that $v_i=c$. But, by choice of
$a$ and $b$, $(v/2)_i=c/2=(a+b)/2\not\in A$, so $v/2\not\in A^n$.

Next, as before, we write
\[
S=\{x\in\Z^n: x\in A^n, v-x\in A^n\}=\{x\in\Z^n: \Prb(x\in S_n,v-x\in S_n)>0\}.
\]
As before, $S$ is a finite set. From the above argument, there is no $x$ for which $x=v-x$.
Furthermore, the sets $\{x,v-x\}$ are all disjoint. Therefore
\begin{align*}
\Prb(v\in S_n+S_n)
&=1-\prod_{\substack{x\in S\\x<v-x}}(1-\Prb(x\in S_n,v-x\in S_n))\\
&=1-\prod_{\substack{x\in S\\x<v-x}}(1-\Prb(x\in S_n)\Prb(v-x\in S_n))\\
&=1-\prod_{\substack{x\in S\\x<v-x}}(1-\Prb(x\in S_n)\Prb(v-x\in T_n))\\
&=1-\prod_{\substack{x\in S\\x<v-x}}(1-\Prb(x\in S_n,v-x\in T_n))\\
&=\Prb(v\in S_n+^<T_n).
\end{align*}
As in the proof of Theorem~\ref{T:rainbow connection}, the result about expectations follows
by summing over $v\in R_n$.
\end{proof}

Theorems~\ref{T:rainbow connection} and~\ref{T:rainbow connection II} give us a good bound on $\E|S_n+k\cdot S_n|$
since, usually, most of the elements of $S_n+k\cdot T_n$ are in fact rainbow.
Recall that $(\E|S_n+k\cdot T_n|)^{1/n}\to\norm{\alpha+k\cdot\alpha}=\norm{\gamma}$ as $n\to\infty$.

\begin{thm}\label{T:taste the rainbow}
Let $\alpha$ be a fractional set with $\norm\alpha>1$, and let
$\gamma:=\alpha+k\cdot\alpha$ for a fixed nonzero integer $k$. For $n\ge 1$,
let $S_n$ and $T_n$ be drawn independently from $\alpha^n$. Then there exists an $\epsilon>0$ such that
$\E|(S_n+k\cdot T_n)\setminus R_n|=o((\norm\gamma-\epsilon)^n)$ as $n\to\infty$. 	
\end{thm}

\begin{proof} As above, write $\Gamma={\rm supp}(\gamma)$.
Let $p\in[0,1]$ be such that $\norm\gamma=\sum_{z\in\Gamma}\gamma(z)^p$, and let $\delta=\min_{z\in\Gamma}\gamma(z)$.
For $z\in\Gamma$, let
\[
Q_z=\{v\in S_n+k\cdot T_n: v_i\not= z{\rm \ for\ all\ }i\}.
\]
From the argument in the proof of Theorem~\ref{T:fractional2setupper}, we have, for all $0\le q\le 1$,
\[
\E|Q_z|\le \left(\sum_{w\ne z}\gamma(w)^q\right)^n.
\]
In particular, taking $q=p$,
\[
\E|Q_z|\le \left(\sum_{w\ne z}\gamma(w)^p\right)^n=(\norm\gamma-\delta^p)^n,
\]
and so
\[
\E|(S_n+k\cdot T_n)\setminus R_n|\le |\Gamma|(\norm\gamma-\delta^p)^n=o((\norm\gamma-\epsilon)^n,
\]
for any $\epsilon<\delta^p$.
\end{proof}

\begin{thm}\label{T:one set upper}
Let $\alpha$ be a fractional set with $\norm\alpha>1$, and let
$\gamma:=\alpha+k\cdot\alpha$ for a fixed nonzero integer $k$. For $n\ge 1$,
let $S_n$ be drawn from $\alpha^n$. Then
\[
\E|S_n+k\cdot S_n|\le2\norm{\gamma}^n.
\]
\end{thm}
\begin{proof}
Write
\[
S_n\,\hat{+}\,k\cdot S_n=\{a+kb: a,b\in S_n, a\not=b\}.
\]
Then
\[
S_n+k\cdot S_n=(S_n\,\hat{+}\,k\cdot S_n)\cup ((k+1)\cdot S_n)
\]
and so
\[
|S_n+k\cdot S_n|\le |S_n\,\hat{+}\,k\cdot S_n|+|S_n|.
\]
The argument in the proof of Theorem~\ref{T:fractional2setupper} shows that
\[
\E|S_n\,\hat{+}\,k\cdot S_n|\le\norm{\gamma}^n,
\]
and Lemmas~\ref{L:expectation} and~\ref{L:we do use this} show that $\E|S_n|=\norm{\alpha}^n\le\norm{\gamma}^n$, so that
\[
\E|S_n+k\cdot S_n|\le 2\norm{\gamma}^n,
\]
as required.
\end{proof}

\noindent{\it Proof of Theorem~\ref{T:fractional}.} Let $\alpha$ be a fractional set with $\norm{\alpha}> 1$, and suppose
$S_n\subseteq\Z^{n}$ is drawn from $\alpha^n$. We have $\E|S_n|=\norm{\alpha}^n$ and $\Var|S_n|\le\norm{\alpha}^n$
from Lemmas~\ref{L:expectation} and~\ref{L:variance} respectively. It remains to show that
\[
\lim_{n\to\infty}(\E|S_n+k\cdot S_n|)^{1/n}=\norm{\alpha+k\cdot\alpha}.
\]
Let $\gamma:=\alpha+k\cdot\alpha$. From Theorem~\ref{T:one set upper} we have, for all nonzero $k$,
\[
\limsup_{n\to\infty}\E|S_n+k\cdot S_n|^{1/n}\le\norm{\gamma}.
\]
Now, if $k\not=1$, Theorem~\ref{T:rainbow connection} gives
\begin{align*}
\E|S_n+k\cdot S_n|&\ge\E|(S_n+k\cdot S_n)\cap R_n|\\
&=\E|(S_n+k\cdot T_n)\cap R_n|\\
&=\E|S_n+k\cdot T_n|-\E|(S_n+k\cdot T_n)\setminus R_n|.
\end{align*}
By Theorem~\ref{T:fractional two}, we have $\E|S_n+k\cdot T_n|^{1/n}\to\norm{\gamma}$, and
Theorem~\ref{T:taste the rainbow} gives us an $\epsilon>0$ such that
$\E|(S_n+k\cdot T_n)\setminus R_n|=o((\norm{\gamma}-\epsilon)^n)$.
Consequently, we have
\[
\liminf_{n\to\infty}\E|S_n+k\cdot S_n|^{1/n}\ge\norm{\gamma}
\]
and hence $\E|S_n+k\cdot S_n|^{1/n}\to\norm{\gamma}$, as required.

\medskip

\noindent When $k=1$, Theorem~\ref{T:rainbow connection II} gives
\begin{align*}
\E|S_n+S_n|&\ge\E|(S_n+S_n)\cap R_n|\\
&=\E|(S_n+^<T_n)\cap R_n|\\
&=\E|S_n+^<T_n|-\E|(S_n+^<T_n)\setminus R_n|\\
&=\tfrac12\E|S_n\,\hat{+}\,T_n|-\E|(S_n+^<T_n)\setminus R_n|\\
&\ge\tfrac12(\E|S_n+T_n|-\E|S_n|)-\E|(S_n+^<T_n)\setminus R_n|\\
&=\tfrac12(\E|S_n+T_n|-\norm{\alpha}^n)-\E|(S_n+^<T_n)\setminus R_n|.
\end{align*}
Now, if $\norm{\gamma}=\norm{\alpha}$, we certainly have $\E|S_n+S_n|\ge\E|S_n|=\norm{\alpha}^n=\norm{\gamma}^n$,
so we may assume $\norm{\gamma}>\norm{\alpha}$. In this case, Theorems~\ref{T:fractional two}
and~\ref{T:taste the rainbow} now yield
\[
\liminf_{n\to\infty}\E|S_n+S_n|^{1/n}\ge\norm{\gamma}
\]
and hence $\E|S_n+S_n|^{1/n}\to \norm{\gamma}$, as required.

\begin{cor}
Let $\alpha$ be a fractional set with $\norm\alpha>1$, and let
$\gamma:=\alpha+k\cdot\alpha$ for $k\not=1$. For $n\ge 1$,
let $S_n$ be drawn from $\alpha^n$. Then,
if $\gamma$ is strictly spartan (i.e., $\sum_{n\in{\rm supp}(\gamma)}\gamma(n)\log\gamma(n)<0$),
then
\[
\E\left(|S_n|^2-|S_n+k\cdot S_n|\right)=o(\E|S_n|^2).
\]
\end{cor}

\begin{proof}
If $\gamma=\alpha+k\cdot\alpha$ is spartan, then $\norm{\gamma}=\norm{\alpha}^2$.
We have
\begin{align*}
\E|S_n+k\cdot S_n|
&\ge \E|S_n+k\cdot T_n|+o((\norm{\gamma}-\epsilon)^n)\\
&=\E|S_n||T_n|+o(\norm{\gamma}^n)+o((\norm{\gamma}-\epsilon)^n)\\
&=(\E|S_n|)^2+o(\norm{\gamma}^n)\\
&=\E(|S_n|^2)-\Var|S_n|+o(\norm{\gamma}^n)\\
&=\E(|S_n|^2)+o(\norm{\gamma}^n),
\end{align*}
where the first line follows from Theorems~\ref{T:rainbow connection}
and~\ref{T:taste the rainbow} (as in the proof of Theorem~\ref{T:fractional}),
the second line follows from Lemma~\ref{L:sparta for two sets},
and the last line is from Lemma~\ref{L:variance}. The conclusion follows, since the random
variable $|S_n|^2-|S_n+k\cdot S_n|$ is always nonnegative.
\end{proof}

\begin{cor}
Let $\alpha$ be a fractional set with $\norm\alpha>1$, and let
$\gamma:=\alpha+\alpha$. For $n\ge 1$,
let $S_n$ be drawn from $\alpha^n$. Then,
if $\gamma$ is strictly spartan (i.e., $\sum_{n\in{\rm supp}(\gamma)}\gamma(n)\log\gamma(n)<0$),
then
\[
\E\left(\tfrac12|S_n|^2-|S_n+S_n|\right)=o(\E|S_n|^2).
\]
\end{cor}

\begin{proof}
If $\alpha+\alpha$ is spartan, then $\norm{\gamma}=\norm{\alpha}^2$.
We have
\begin{align*}
\E|S_n+S_n|
&\ge\tfrac12\E|S_n+T_n|+o(\norm{\gamma}^n)\\
&=\tfrac12\E|S_n||T_n|+o(\norm{\gamma}^n)\\
&=\tfrac12(\E|S_n|)^2+o(\norm{\gamma}^n)\\
&=\tfrac12\E(|S_n|^2)-\tfrac12\Var|S_n|+o(\norm{\gamma}^n)\\
&=\tfrac12\E(|S_n|^2)+o(\norm{\gamma}^n),
\end{align*}
where the first line follows from Theorems~\ref{T:rainbow connection II}
and~\ref{T:taste the rainbow} (as in the proof of Theorem~\ref{T:fractional}),
the second line follows from Lemma~\ref{L:sparta for two sets},
and the last line is from Lemma~\ref{L:variance}. The conclusion follows, since the random
variable $\tfrac12|S_n|^2+\tfrac12|S_n|-|S_n+S_n|$ is always nonnegative, and $\E|S_n|=\norm{\alpha}^n
=o(\norm{\gamma}^n)$.
\end{proof}

Theorem~\ref{T:spartan} follows from the last two corollaries, Theorem~\ref{T:fractional},
and Markov's inequality,
noting that the random variables $|S_n|^2-|S_n+k\cdot S_n|$ (for $k\not=1$) and
$\tfrac12|S_n|^2+\tfrac12|S_n|-|S_n+S_n|$ are always nonnegative.

\subsection{Ruzsa's method, and Hennecart, Robert and Yudin's construction}\label{S:RuzsaHRY}

In~\cite{RuzsaRandom}, Ruzsa constructs sets by taking a finite set $S$, and a
fixed probability $0<q<1$, and selecting subsets of $\Z^n$ by taking each
element of $S^n$ independently with probability $q^n$. In our terminology, this is
the same as drawing from $\alpha^n$, where $\alpha$ is the fractional set
$q\mathbbm{1}_S$.

Let us suppose that $|S|=M$ and $|S+k\cdot S|=N$, and write
\[
S+k\cdot S=\{x_1,\ldots,x_N\}.
\]
For $1\le i\le N$, write $\lambda_i$ for the number of ordered pairs $(s_1,s_2)\in S^2$ such that
$x_i=s_1+ks_2$. We say that $\lambda_i$ is the {\em multiplicity} of $x_i$, and that the multiset
$\{\lambda_1, \lambda_2, \ldots, \lambda_N\}$ is the {\em multiplicity spectrum} of the fractional dilate
$S+k\cdot S$. Note that $\sum_{i=1}^N\lambda_i=M^2$. With these definitions, we have
\[
(\alpha+k\cdot\alpha)(x)=\begin{cases}
0 & \textrm{ if }x\notin S+k\cdot S,\\
q^2\lambda_i &\textrm{ if }x=x_i.
\end{cases}
\]
Theorem~\ref{T:fractionalsize} now specializes to the following.

\begin{thm}\label{T:Ruzsa} For a finite set $S\subseteq\Z$, and a fixed probability $0<q<1$,
let $\alpha=q\mathbbm{1}_S$. Then, for a nonzero integer $k$, with $M,N,\lambda_i$ defined
as above, and with $\gamma:=\alpha+k\cdot\alpha$, we have
\[
\norm{\gamma}=\norm{\alpha+k\cdot\alpha}=\begin{cases}
(qM)^2 &\textrm{ if }q^2\le\prod_i\lambda_i^{-\lambda_i/M^2}\ \ (\gamma{\rm\ is\ spartan})\\
N &\textrm{ if }q^2\ge\prod_i\lambda_i^{-1/N}\ \ (\gamma{\rm\ is\ opulent})\\
q^{2p}\sum_i\lambda_i^p &\textrm{ if }q^2=
\prod_i\lambda_i^{-\Lambda(p)}\ \ (\gamma {\rm\ is\ }p{\rm -comfortable}),\\
\end{cases}
\]
where
\[
\Lambda(p)=\frac{\lambda_i^p}{\sum_1^N\lambda_j^p}.
\]
\end{thm}
\noindent Before proceeding to the Hennecart, Robert and Yudin construction, we first
give an easier example, using the set $\{0, 1, 3, 7\}$.

\begin{clm}
For all $\epsilon>0$, there exists a set $S$ with
$|S-S|>|S|^{2-\epsilon}$ and $|S+S|<|S|^{1.8983+\epsilon}$.
\end{clm}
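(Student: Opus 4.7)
The plan is to apply Ruzsa's method with $S = \{0, 1, 3\}$, using the setup established at the start of this section. First I would compute the two multiplicity spectra that feed into the formula for $\norm{\alpha+k\cdot\alpha}$. For $S-S = \{-3, -2, -1, 0, 1, 2, 3\}$ the spectrum is $\{1,1,1,3,1,1,1\}$ (the entry of multiplicity $3$ comes from the three representations $0 = 0 - 0 = 1 - 1 = 3 - 3$), while for $S + S = \{0, 1, 2, 3, 4, 6\}$ it is $\{1, 2, 1, 2, 2, 1\}$ (multiplicity $2$ at each of $1, 3, 4$). Here $M = |S| = 3$, $N_- = 7$ and $N_+ = 6$.

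Setting $\alpha = q \mathbbm{1}_S$, the formulas for $\norm{\alpha \pm \alpha}$ given at the start of the section show that $\alpha - \alpha$ is spartan precisely when $q^2 < 1/\prod_i \lambda_i^{\lambda_i/9} = 3^{-1/3}$, i.e.\ $q < 3^{-1/6}$, and that $\alpha + \alpha$ is opulent precisely when $q^2 > 1/\prod_i \lambda_i^{1/6} = 2^{-1/2}$, i.e.\ $q > 2^{-1/4}$. The exponent $1.9364$ in the statement is exactly $\log 6/\log(3\cdot 2^{-1/4})$, which equals the ratio $\log \norm{\alpha + \alpha}/\log \norm\alpha$ achieved at $q = 2^{-1/4}$: at this value $\alpha+\alpha$ sits on the boundary of opulence with $\norm{\alpha + \alpha} = N_+ = 6$, while $\norm\alpha = 3 q = 3\cdot 2^{-1/4}$.

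Fixing $q$ at (or just above) $2^{-1/4}$, Theorem~\ref{T:fractional} gives that for $S_n \subseteq \Z^n$ drawn from $\alpha^n$, $(\E|S_n|)^{1/n} \to 3 \cdot 2^{-1/4}$ and $(\E|S_n + S_n|)^{1/n} \to 6$, while $(\E|S_n - S_n|)^{1/n}$ tends to the comfortable value of $\norm{\alpha - \alpha}$, sitting very close to (but below) $\norm\alpha^2$. Following the template of the proof of Corollary~\ref{C:1.7354}, I would then combine Chebyshev on $|S_n|$ (which is a sum of independent Bernoullis, so has variance at most $\E|S_n|$) with Markov on $\E|S_n|^2 - \E|S_n - S_n|$ to deduce that with probability at least $1/2$ the random set $S_n$ has $|S_n|$ comparable to $\norm\alpha^n$, $|S_n + S_n|$ comparable to $6^n$, and $|S_n - S_n|$ very close to $\norm\alpha^{2n}$. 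Rewriting these in exponent form against $|S_n|$ yields both inequalities of the claim for all sufficiently large $n$.

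The main obstacle is that the two critical thresholds $q = 3^{-1/6}$ (for spartan $\alpha - \alpha$) and $q = 2^{-1/4}$ (for opulent $\alpha + \alpha$) do not coincide, so at $q = 2^{-1/4}$ the ratio $\norm{\alpha - \alpha}/\norm\alpha^2$ is strictly less than $1$; the hard part is to show that it nevertheless exceeds $\norm\alpha^{-\epsilon}$ for any desired $\epsilon > 0$. One likely route is to let $q$ depend on $\epsilon$ (or equivalently to use slightly non-uniform weights on $\{0, 1, 3\}$), pushing $\alpha - \alpha$ back into the spartan region while $\alpha + \alpha$ is still close enough to the opulent threshold that $\log\norm{\alpha+\alpha}/\log\norm\alpha$ stays below $1.9364 + \epsilon$; quantifying this trade-off carefully is where I expect the bulk of the work to lie.
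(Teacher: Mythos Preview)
Your setup mirrors the paper's exactly: the same base set $\{0,1,3\}$, the same multiplicity spectra, and the same two thresholds $q=3^{-1/6}$ (spartan $\alpha-\alpha$) and $q=2^{-1/4}$ (opulent $\alpha+\alpha$). The difference is in how the threshold comparison is handled. The paper, having noted that $2^{-1/4}>3^{-1/6}$, simply asserts that taking $q$ just below $2^{-1/4}$ yields both $\norm{\alpha+\alpha}=6$ and $\alpha-\alpha$ spartan (so $\norm{\alpha-\alpha}=(3q)^2$), and then reads off the exponent $\log 6/\log(3\cdot 2^{-1/4})\approx 1.9364$. Your worry is well placed: since $3^{-1/6}<2^{-1/4}$, no single $q$ makes $\alpha-\alpha$ spartan and $\alpha+\alpha$ opulent simultaneously, so the paper's step ``$\norm{\alpha-\alpha}=(3q)^2$ and $\alpha-\alpha$ spartan'' at $q$ near $2^{-1/4}$ is not justified as written.

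However, your proposed repair does not save the constant $1.9364$. If you push $q$ into the spartan range $q<3^{-1/6}$ so that the $|S_n-S_n|\ge |S_n|^{2-\epsilon}$ part goes through via Theorem~\ref{T:fractional}, then the sum exponent $\log\norm{\alpha+\alpha}/\log(3q)$ is minimised at the endpoint $q=3^{-1/6}$, where (even using the crude bound $\norm{\alpha+\alpha}\le 6$) it equals
\[
\frac{\log 6}{\log(3^{5/6})}=\tfrac{6}{5}\log_3 6\approx 1.957,
\]
not $1.9364$. The value $1.9364$ genuinely requires $q=2^{-1/4}$, which lies outside the spartan region for $\alpha-\alpha$. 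So with $S=\{0,1,3\}$ and uniform weights this argument proves the Claim only with $1.9364$ replaced by roughly $1.957$; obtaining $1.9364$ would need either a concentration statement for $|S_n-S_n|$ in the comfortable regime (which the paper does not provide) or a different base construction.
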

\noindent Using the proof of Corollary~\ref{C:1.7354} from the introduction, this will follow from
the next claim (just as Corollary~\ref{C:1.7354} follows from Theorem~\ref{T:1.7354}).

\begin{clm}
There exists a fractional set $\alpha$ for which $\norm\alpha>1$,
$\alpha-\alpha$ is strictly spartan (so that $\norm{\alpha-\alpha}=\norm{\alpha}^2$), and
$\norm{\alpha+\alpha}\le\norm{\alpha}^{1.8983}$.
\end{clm}
\begin{proof}
Let $S=\{0,1,3,7\}$, and let $\frac14<q<1$ be fixed. Then $S+S=\{0,2,6,14,1,3,7,4,8,10\}$, with corresponding
multiplicities $\{1,1,1,1,2,2,2,2,2,2\}$. From Theorem~\ref{T:Ruzsa}, we have
\[
\norm{\alpha+\alpha}=\begin{cases}
(4q)^2 &\textrm{ if }q\le 2^{-\frac38},\\
10 &\textrm{ if }q\ge 2^{-\frac{3}{10}},\\
2q^{2p}(2+3\cdot2^p) &\textrm{ if }q=2^{-\frac{3\cdot 2^p}{2(2+3\cdot2^p)}}.\\
\end{cases}
\]
On the other hand, we have that $S-S=\{-7,-6,-4,-3,-2,-1,0,1,2,3,4,6,7\}$, with corresponding multiplicities
$\{1,1,1,1,1,1,4,1,1,1,1,1,1\}$, so that
\[
\norm{\alpha-\alpha}=\begin{cases}
(4q)^2 &\textrm{ if }q\le 2^{-\frac14},\\
13 &\textrm{ if }q\ge 2^{-\frac1{13}},\\
q^{2p}(12+4^p) &\textrm{ if }q=2^{-\frac{4^p}{12+4^p}}.\\
\end{cases}
\]
Note that
\[
\tfrac14<2^{-\frac{3}{10}}<2^{-\frac14},
\]
so that if we let $q$ be just slightly less than $2^{-\frac14}$, $\alpha-\alpha$ will be strictly spartan,
and also
\[
\norm{\alpha}=4q=2^{\frac74}-\epsilon,\ \ \ \ \ \norm{\alpha+\alpha}=10,\ \ \ \ \ \norm{\alpha-\alpha}=(4q)^2=\left(2^{\frac74}-\epsilon\right)^2,
\]
so that
\[
\norm{\alpha+\alpha}<\norm{\alpha}^{\frac{4\log 10}{7\log 2}+\epsilon'}<\norm{\alpha}^{1.8983},
\]
as required.
\end{proof}

Hennecart, Robert and Yudin~\cite{HRY} constructed
sets $A_{k,d}\in\Z^{d+1}$ for which $|A_{k,d}-A_{k,d}|$ is much larger than
$|A_{k,d}+A_{k,d}|$. Their construction is
\[
A_{k,d}=\{(x_1, \ldots, x_{d+1}):
x_1, \ldots, x_{d+1}\ge 0,\,x_1+\cdots+x_{d+1}= k\}.
\]
A standard textbook argument shows that $|A_{k,d}|=\binom{k+d}{d}$.
It turns out that the multiplicity spectrum for  $A_{k,d}-A_{k,d}$ is particularly easy to describe.

\begin{thm}\label{T:HNY Subtraction}
The multiplicity spectrum for $A_{k,d}-A_{k,d}$ consists of

\medskip

$\bullet$ one copy of $\binom{k+d}d$

\smallskip

$\bullet$ $\sum_{i=1}^{\min(t,d)}\binom{d+1}i\binom{t-1}{i-1}\binom{t+d-i}{d-i}$ copies of
$\binom{k+d-t}d$, for $1\le t\le k$.

\medskip
\end{thm}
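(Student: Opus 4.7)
The plan is to compute $\mult_{A_{k,d}-A_{k,d}}(z)$ exactly as a function of $z$, and then enumerate the difference vectors $z$ grouped by their multiplicity.

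First I would fix $z \in A_{k,d} - A_{k,d} \subseteq \Z^{d+1}$; since $A_{k,d}$ sits in the hyperplane $\sum_j x_j = k$, necessarily $\sum_j z_j = 0$. Decompose $z = z^+ - z^-$ into coordinate-wise positive and negative parts, so $z^+$ and $z^-$ are non-negative vectors with disjoint supports, and set $t := \sum_j z^+_j = \sum_j z^-_j$ (equal because $\sum_j z_j = 0$). The key observation is that a pair $(x,y) \in A_{k,d}^2$ with $x - y = z$ is precisely the same data as a non-negative integer vector $w$ with $\sum_j w_j = k - t$: given $(x,y)$, coordinate-wise non-negativity forces $x \ge z^+$ and $y \ge z^-$, and then $w := x - z^+ = y - z^-$ works; conversely any such $w$ reconstructs a valid $(x,y)$. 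By stars and bars the number of such $w$ is $\binom{k - t + d}{d}$ when $t \le k$ (and $z$ is simply not in the difference set otherwise). In particular the multiplicity of $z$ depends only on $t$.

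Next I would count, for each $0 \le t \le k$, the number of $z \in \Z^{d+1}$ with $\sum_j z_j = 0$ and $\sum_j z^+_j = t$. The case $t = 0$ forces $z = 0$, which accounts for the single copy of $\binom{k+d}{d}$. For $t \ge 1$ I stratify by $i := |\{j : z_j > 0\}|$; the constraints $z_j \ge 1$ on positive coordinates and $\sum_j z_j = 0$ force $1 \le i \le \min(t, d)$. For each such $i$ the count factors as: $\binom{d+1}{i}$ choices of the positive positions, $\binom{t-1}{i-1}$ compositions of $t$ into $i$ strictly positive parts for the positive values, and $\binom{t + d - i}{d - i}$ weak compositions of $t$ into the remaining $d+1-i$ slots (as a non-negative assignment summing to $t$, after negating) for the non-positive values. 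Summing over $i$ yields the formula in the statement.

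I do not expect a real obstacle: the argument is a clean bijection followed by elementary composition counting. The only care needed is to verify the range of $i$ (the upper bound $\min(t,d)$ comes from needing at least one negative coordinate and from the positive parts each being $\ge 1$), and to observe that allowing zeros among the ``non-positive'' slots in the third factor is exactly what the weak-composition binomial $\binom{t+d-i}{d-i}$ encodes—these zeros become legitimate zero coordinates of $z$ and do not contribute to $t$.
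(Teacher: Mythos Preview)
Your proposal is correct and follows essentially the same approach as the paper: compute the multiplicity of a difference vector via the bijection with non-negative $(d+1)$-tuples summing to $k-t$ (where $t$ is the sum of the positive part), then count difference vectors with a given $t$ by stratifying on the number $i$ of strictly positive coordinates and using standard composition counts. The only differences are notational (you swap the roles of $w$ and $z$ relative to the paper) and that you spell out the bijection slightly more carefully.
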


\begin{proof}
Clearly, the multiplicity of $0\in A_{k,d}-A_{k,d}$ is $|A_{k,d}|=\binom{k+d}{d}$.

For the other multiplicities, fix $0\not=w=(w_1, \ldots, w_{d+1})\in A_{k,d}-A_{k,d}$. Then $w=x-y$,
where $x=(x_1, \ldots, x_{d+1})$ and $y=(y_1, \ldots,
y_{d+1})$ are nonnegative vectors summing to $k$. It follows that $\sum_iw_i=0$.

Next, write $w^{+}$ for the vector containing only the positive coordinates of $w$, so that
\[
w^{+}=(\max\set{w_1, 0}, \ldots, \max\set{w_{d+1},0}),
\]
and write $w^{-}$ for the corresponding vector with the negative coordinates. Then,
for all $i$, we have
$\max\set{w_i, 0}\le x_i$, so that $1\le\sum_i(w^{+})_i:=t\le k$.
If $w$ is any integer vector with $\sum_i w_i=0$, and $x$ and $y$
are nonnegative vectors such that $w=x-y$, then the vector $z=x-w^{+}$
is a nonnegative vector with $\sum_i z_i=k-t$. Conversely, for any such $z$,
the vectors $x=w^{+}+z$ and $y=z-w^{-}$ are nonnegative vectors, both
summing to $k$, and satisfying $w=x-y$. Consequently,
the multiplicity of $w$ in $A_{k,d}-A_{k,d}$ is just the number of choices for $z$,
i.e., the number of nonnegative $(d+1)$-dimensional vectors summing to
$k-t$. This number is $\binom{k-t+d}{d}$.

To calculate the number of integer vectors of length $d+1$, summing to 0,
whose positive elements sum to $t>0$, we classify such vectors according to
the number $i$ of their positive elements. This number must be in the range $1\le i\le\min\set{t,d}$.
The number of choices for the locations of the $i$ positive elements is $\binom{d+1}{i}$.
The number of choices for the $i$ positive numbers summing to $t$ equals
the number of choices for $i$ nonnegative numbers summing to $t-i$, which is $\binom{t-1}{i-1}$.
Finally, the number of choices for the $d+1-i$ nonpositive numbers summing to $-t$ is
$\binom{t+d-i}{d-i}$.
\end{proof}

This allows us to prove Theorem~\ref{T:1.7354}, namely, that there exists a fractional
set $\alpha$ for which $\norm\alpha>1$,
$\alpha-\alpha$ is strictly spartan, and with
$\norm{\alpha+\alpha}\le\norm{\alpha}^{1.7354}$.

\begin{proof}[Proof of Theorem~\ref{T:1.7354}]
Our $\alpha$ will be $q\mathbbm{1}_{A_{k,d}}$, where $k$ and $d$ will be chosen later, and
where $0<q<1$ is a probability that will be chosen in terms of $k$ and $d$.
For $0\le t\le k$, let $\lambda_t=\binom{k+d-t}d$. Let $\mu_0=1$, and, for $t>0$, let
\[\mu_t=\sum_{i=1}^t\binom{d+1}i\binom{t-1}{i-1}\binom{t+d-i}{d-i}.
\]

\noindent Now, from Theorem~\ref{T:HNY Subtraction},
we know that the nonzero values of $\alpha-\alpha$ consist of
$\mu_t$ copies of $q^2\lambda_t$, for each $0\le t\le k$. Write
$A=A_{k,d}-A_{k,d}$. Then, from Theorem~\ref{T:fractionalsize}, $\alpha-\alpha$ is strictly spartan when
\[
\sum_{x\in A}(\alpha-\alpha)(x)\log_2 (\alpha-\alpha)(x)<0,\\
\]
or
\[
\sum_{t=0}^k q^2\lambda_t\mu_t\log_2(q^2\lambda_t)<0.
\]
Rearranging, we need
\[
\sum_{t=0}^{k}2\lambda_t\mu_t\log_2 q<
-\sum_{t=0}^k\lambda_t\mu_t\log_2\lambda_t.
\]
In summary, for $\alpha-\alpha$ to be strictly spartan, we need $q<2^{-f(k,d)}$, where
\[
f(k,d)=\frac{\sum_{t=0}^k\lambda_t\mu_t\log_2(\lambda_t)}{2\sum_{t=0}^{k}\lambda_t\mu_t}.
\]

\noindent We need to show that we can choose $q$ so that, in addition, $\norm{\alpha}=q\binom{k+d}{d}>1$.
For this, we need an upper bound on $f(k,d)$. But $2f(k,d)$ is just a weighted average of the
$k+1$ numbers $\log_2\lambda_t$. Therefore
\[
2f(k,d)=\frac{\sum_{t=0}^k\mu_t\lambda_t\log_2\lambda_t}{\sum_{t=0}^{k}\mu_t\lambda_t}
\le \max_t\log_2\lambda_t=\log_2\lambda_0
=\log_2\binom{k+d}{d}.
\]

\noindent Consequently, we have $2^{-f(k,d)}\ge\binom{k+d}{d}^{-1/2}$, so that if we choose $q$ satisfying
\[
\binom{k+d}{d}^{-1}<q<\binom{k+d}{d}^{-1/2}\le 2^{-f(k,d)}=:p(k,d)
\]
then $\alpha-\alpha$ will be strictly spartan, and we will also have $\norm{\alpha}>1$.

Finally, we turn to $\alpha+\alpha$. We have
\[
A_{k,d}+A_{k,d}= \{(x_1, \ldots, x_{d+1}):
x_1, \ldots, x_{d+1}\ge 0,\,x_1+\cdots+x_{d+1}= 2k\},
\]
so $\norm{\alpha+\alpha}\le|A_{k,d}+A_{k,d}|=\binom{2k+d}{d}$. Note that we believe
$\alpha+\alpha$ is opulent, so we have equality here, but we do not need that for this proof.
If we let
\[
\beta=\beta(k,d)=\frac{\log\binom{2k+d}{d}}{\log(p(k,d)|A_{k,d}|)},
\] it follows that
$\norm{\alpha+\alpha}\le\norm{\alpha}^\beta$. Computer calculations show that the function
$\beta(k,d)$ seems to have a global minimum of $\beta=1.735383\ldots$ at $d=14929$ and $k=987$.
\end{proof}

\subsection{MSTD sets and the region $F_{1,-1}$}

In this subsection, we collect all the results displayed in Figure~\ref{fig:feas2}.

For a finite set $A\subset Z$, with sumset $A+A$ and difference set $A-A$, write
\[
|A+A|=|A|^x{\rm\ \ \ and\ \ \ }|A-A|=|A|^y.
\]
In 1973, Freiman and Pigarev~\cite{FP} proved
\[
|A+A|^{3/4}\le |A-A|\le |A+A|^{4/3} {\rm \ \ \ or\ \ \ } \tfrac34 x\le y\le \tfrac43 x.\eqno(1)
\]
Noting that $1\le x,y\le 2$, this is weaker than the best known bounds, proved by Ruzsa~\cite{Ruz1976,RuzsaRandom}:
\[
\left(\frac{|A+A|}{|A|}\right)^{\frac12}\le \frac{|A-A|}{|A|}\le \left(\frac{|A+A|}{|A|}\right)^2 {\rm \ \ \ or\ \ \ } y\le 2x-1 {\rm\ \ \ and\ \ \ }x\le 2y-1.\eqno(2)
\]
The upper bound comes from taking $B=C=-A$ in Ruzsa's triangle inequality:
\[
|A||B-C|\le |A-B||A-C|,
\]
and the lower bound comes from taking $B=C=-A$ in Corollary 7.3.6 of~\cite{Zhao}:
\[
|A||B+C|\le |A+B||A+C|.
\]
Together, these results show that the shaded regions in Figure~\ref{fig:feas2} are all infeasible.

Next we turn to MSTD sets; these are sets $A$ satisfying $|A+A|>|A-A|$.
The terminology is due to Nathanson, and stands for ``more sums than differences".
According to Nathanson (see Footnote 1 of~\cite{Nat}), Conway found the first MSTD set in 1967:
\[
A=\{0,2,3,4,7,11,12,14\}.
\]
This is the smallest MSTD set, it's unique among sets of size 8 up to scaling and shifting, and it satisfies
\[
|A|=8\ \ \ \ \ |A-A|=25\ \ \ \ \ |A+A|=26.
\]
Various families of MSTD sets are known. The record-breaker, in terms of minimizing both $\frac{y}{x}$ and $\frac{y-1}{x-1}$
(in our notation), is due to Penman and Wells~\cite{PW}. They construct a family of sets $A_t$, for $t\ge 1$, with
\[
|A_t|=5t+17\ \ \ \ \ |A_t-A_t|=26t+61\ \ \ \ \ |A_t+A_t|=32t+63.
\]
Both these constructions are also illustrated in Figure~\ref{fig:feas2}, with the latter one plotted for real $t\ge1$.

Finally, we discuss the construction of Hennecart, Robert and Yudin~\cite{HRY}. As already mentioned, their
construction is
\[
A_{k,d}=\{(x_1, \ldots, x_{d+1}):
x_1, \ldots, x_{d+1}\ge 0,\,x_1+\cdots+x_{d+1}= k\},
\]
and they show that
\[
|A|=\binom{k+d}{d}\ \ \ \ \ \ \ \ |A+A|=\binom{2k+d}{d}\ \ \ \ \ \ \ \ |A-A|=\sum_{t=0}^{\min(d,k)}\binom{d}{t}^2\binom{k+d-t}{d}.
\]
Taking $k=ad$, for $a>0$, yields the curve shown in Figure~\ref{fig:feas2}. Hennecart, Robert and Yudin choose $k=d/2$,
so that
\[
|A|=\binom{\tfrac{3d}{2}}{d}\ \ \ \ \ \ \ \ |A+A|=\binom{2d}{d}\ \ \ \ \ \ \ \ |A-A|\ge\max_{0\le t\le \tfrac{d}{2}}\binom{d}{t}^2\binom{\tfrac{3d}{2}-t}{d}.
\]
This results in the asymptotics
\[
\frac{\log|A+A|}{\log|A|}\to\frac{4\log 2}{3\log 3-2\log 2}\approx 1.4520
\]
and
\[
\frac{\log|A-A|}{\log|A|}\to\frac{4\log(1+\sqrt2)}{3\log 3-2\log 2}\approx1.8463,
\]
leading to the point $(1.4520,18463)$ plotted in Figure~\ref{fig:feas2}.

\section{Open Questions}\label{S:Open Questions}

We introduced the concept of a fractional dilate as a more general version of
Ruzsa's method from ~\cite{RuzsaRandom}. Ruzsa's method is the
specialisation where a fractional dilate has all nonzero values equal.
However, we in fact only used this same specialisation to prove
Theorem~\ref{T:1.7354}. We believe that using a more general fractional dilate
supported on the Hennecart, Robert and Yudin sets $A_{k,d}$ could give a better
bound than 1.7354. In particular, the best bound one can get
from a Ruzsa-style dilate on $A_{2,d}$ is that for $d=23$, which with a value of
\[
q=5^{-\frac1{300}}2^{-\frac{46}{625}}12^{-\frac{1129}{15000}}
\]
yields a bound of 1.7897. However, if one takes a fractional dilate on
$A_{2,22}$ with a value of approximately $0.9951$ on the elements of the form
$2e_i$, and approximately $0.7617$ on the elements of the form $e_i+e_j$, one can instead
get a bound of 1.7889.

Our proof of Theorem~\ref{T:1.7354} relies on the fact that if $S_n$ is drawn
from $\alpha^n$, and $\alpha-\alpha$ is spartan, then the size of $S_n-S_n$ is
close to its expected value. We believe that this holds without the requirement
of spartaneity.

\begin{conj}\label{C:Big Conjecture}
If $\alpha$ is a fractional dilate with $\norm\alpha>1$, $k$ is a positive
integer, and $A_n$ is drawn from $\alpha^n$, then
\[
\lim_{n\to\infty}\frac{\log |A_n+k\cdot A_n|}{n}=\log\,\norm{\alpha+k\cdot\alpha}.
\]
\end{conj}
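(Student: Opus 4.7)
Write $\gamma:=\alpha+k\cdot\alpha$ with finite support $X$ of size $N$; the aim is to establish convergence in probability. The upper bound is immediate from Markov applied to Theorem~\ref{T:fractional2setupper}: since $\E|A_n+k\cdot A_n|\le\norm{\gamma}^n$, we get $\Pr(|A_n+k\cdot A_n|\ge(\norm{\gamma}+\epsilon)^n)\le(\norm{\gamma}/(\norm{\gamma}+\epsilon))^n\to 0$. For the lower bound, the plan is to stratify $X^n$ by type: for each frequency vector $y=(y_1,\ldots,y_N)$, let $B_n^y\subseteq X^n$ be the set of vectors of type $y$ (so $|B_n^y|\approx 2^{nH(y)}$) and $V_y:=B_n^y\cap(A_n+k\cdot A_n)$. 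By Corollary~\ref{C:abittechnical}, every $v\in B_n^y$ lies in $A_n+k\cdot A_n$ with probability $p_n^{(y)}$ satisfying $(p_n^{(y)})^{1/n}\to\min(1,t_y)$, where $t_y:=\prod_i\gamma(x_i)^{y_i}$. By Lemma~\ref{C:Entropy And Scale of Norm} the dominant type $y^*$ that maximises $2^{H(y)}\min(1,t_y)$ attains the value $\norm{\gamma}$, so it suffices to show $|V_{y^*}|=\norm{\gamma}^{n(1-o(1))}$ with high probability.

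The spartan case is already addressed by the variance bound of Corollary~\ref{C:sparta for two sets} extended to same-set sums in Section~\ref{S:Dilates 2}. The opulent case, in which $y^*$ is uniform and $t_{y^*}>1$, admits a short proof along the following lines. For a typical $v\in B_n^{y^*}$ the random multiplicity $\mult_{A_n+k\cdot A_n}(v)$ is a non-negative sum of Bernoullis with $\E\mult_{A_n+k\cdot A_n}(v)$ growing exponentially, so $\Var\mult_{A_n+k\cdot A_n}(v)\le\E\mult_{A_n+k\cdot A_n}(v)$ and Chebyshev give $\Pr(v\notin A_n+k\cdot A_n)$ exponentially small. Combining with the deterministic ceiling $|A_n+k\cdot A_n|\le N^n=\norm{\gamma}^n$ and applying Markov to the non-negative gap $N^n-|A_n+k\cdot A_n|$ then delivers the lower bound in probability.

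The main obstacle is the comfortable case, where $t_{y^*}=1$, so $\E\mult_{A_n+k\cdot A_n}(v)$ is only sub-exponential and the probability of hitting a particular $v$ is bounded away from both $0$ and $1$ without a clean closed form; the spartan-style variance bound is therefore unavailable. Here the plan is a Paley--Zygmund / second-moment estimate on $|V_{y^*}|$: show $\E|V_{y^*}|^2\le(1+o(1))(\E|V_{y^*}|)^2$, from which Chebyshev yields $|V_{y^*}|\ge\tfrac12\E|V_{y^*}|$ with probability tending to $1$. The hard part is controlling the correlated pairs $(v,v')$ in the second moment, whose witness sets $\{z:z,v-kz\in A_n\}$ can overlap. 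The intended technique is to first decouple via the rainbow bijection of Theorem~\ref{T:rainbow connection} (or Theorem~\ref{T:rainbow connection II} when $k=1$), replacing $A_n+k\cdot A_n$ by the independent $S_n+k\cdot T_n$ on rainbow vectors of type $y^*$, and then bound the off-diagonal correlations by an entropy estimate in the spirit of Theorem~\ref{T:taste the rainbow}. Making this variance estimate go through uniformly in $n$ is the crux of the conjecture.
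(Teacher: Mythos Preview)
The statement you are trying to prove is Conjecture~\ref{C:Big Conjecture}, which the paper explicitly leaves \emph{open}; there is no proof in the paper to compare against. The paper proves the analogous result only for the \emph{expectation} $\E|A_n+k\cdot A_n|$ (via Theorems~\ref{T:rainbow connection}, \ref{T:taste the rainbow} and their corollaries), and the conjecture asks whether the random variable $|A_n+k\cdot A_n|$ itself concentrates. So what you have written is not a comparison target but an attempted resolution of an open problem.

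As such, your proposal has real gaps. First, you invoke Corollary~\ref{C:abittechnical} to compute $\Pr(v\in A_n+k\cdot A_n)$, but that corollary is stated and proved for two \emph{independent} draws $S_n,T_n$; in the same-set sum $A_n+k\cdot A_n$ the multiplicity $\mult_{A_n+k\cdot A_n}(v)=\sum_{z}\mathbbm{1}_{z\in A_n}\mathbbm{1}_{(v-kz)\in A_n}$ is \emph{not} a sum of independent Bernoullis, because distinct summands can share a coordinate of $A_n$ (e.g.\ $(z,w)$ and $(z',w')$ with $z=w'$). This undermines both your appeal to Corollary~\ref{C:abittechnical} for the per-vector probability and your opulent-case claim that $\Var\le\E$ gives exponentially small failure probability. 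The paper's rainbow bijection repairs the \emph{expectation} on rainbow vectors by matching it with the independent case, but it does not transfer second moments or tail bounds, which is exactly what you need.

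Second, in the comfortable case you yourself concede that ``making this variance estimate go through uniformly in $n$ is the crux of the conjecture''; you have outlined a strategy (Paley--Zygmund on $|V_{y^*}|$ after rainbow decoupling) but not executed the off-diagonal correlation bound. Since that bound is precisely the missing ingredient that the authors could not supply either, the proposal as written is a plan of attack rather than a proof.
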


This would directly imply that various results for the sizes of sums and differences
of sets would also hold for fractional dilates. For example, Ruzsa's Triangle
Inequality would imply that
\[
\norm\alpha\norm{\beta-\gamma}\le\norm{\alpha-\beta}\norm{\alpha-\gamma}
\]
for fractional dilates $\alpha, \beta, \gamma$.

A weaker conjecture is that the feasible regions for dilates coincide with
the feasible regions for fractional dilates.

\begin{conj}\label{C:Small Conjecture}
For any fractional dilate $\alpha$, any positive integer $N$, and any
$\epsilon>0$, there exists a finite subset $S$ of the integers such that,
for all $|k|\le N$,
\[
\left|\frac{\log\norm{\alpha+k\cdot\alpha}}{\log\norm\alpha}-
\frac{\log|S+k\cdot S|}{\log|S|}\right|<\epsilon.
\]
\end{conj}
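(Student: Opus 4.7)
The plan is to bootstrap from Theorem~\ref{T:fractional} and the argument of Corollary~\ref{C:1.7354}, handling all $|k|\le N$ simultaneously by a union bound, and then to reduce the general case to the fractional set case.

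\textbf{Step 1 (fractional sets).} Suppose first that $\alpha$ is a fractional set with $\norm\alpha>1$, and let $S_n$ be drawn from $\alpha^n$. Chebyshev on the sum of independent Bernoullis $|S_n|$ gives $|S_n|\in(0.5,1.5)\norm\alpha^n$ with probability tending to $1$, exactly as in the proof of Corollary~\ref{C:1.7354}. For each nonzero $k$ with $|k|\le N$, Theorem~\ref{T:fractional2setupper} plus Markov gives the upper bound $|S_n+k\cdot S_n|\le\norm{\alpha+k\cdot\alpha}^{(1+\epsilon/2)n}$ with probability tending to $1$. For the matching lower bound we exploit the rainbow connection of Section~\ref{S:Dilates 2}: Theorem~\ref{T:rainbow connection} (or Theorem~\ref{T:rainbow connection II} when $k=1$), combined with Theorem~\ref{T:taste the rainbow} and Corollary~\ref{C:full result two sets finite support}, shows that for an independent copy $T_n$ drawn from $\alpha^n$,
\[
\E|(S_n+k\cdot S_n)\cap R_n|=\E|(S_n+k\cdot T_n)\cap R_n|=\norm{\alpha+k\cdot\alpha}^n(1-o(1)),
\]
and a second-moment calculation on $|(S_n+k\cdot T_n)\cap R_n|$---whose pairwise covariances factor because rainbow vectors force distinct underlying summands---concentrates this quantity around its mean. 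A union bound over the finitely many $k$ produces a single $S_n\subseteq\Z^n$ simultaneously meeting all constraints, and collapsing to $\Z$ via the Freiman homomorphism $(x_1,\ldots,x_n)\mapsto\sum_i B^i x_i$ with $B$ huge relative to $n$, $N$, and the diameter of $\mathrm{supp}(\alpha)$ preserves every $|S_n+k\cdot S_n|$.

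\textbf{Step 2 (general fractional dilates).} Given $\alpha$ with values possibly exceeding $1$, we approximate $\alpha$ by a fractional set $\tilde\alpha$ whose shadow matches $\alpha$'s for all $|k|\le N$ within $\epsilon$. The candidate construction: for each $n\in\mathrm{supp}(\alpha)$, replace the single point $n$ by a cluster of $M_n:=\lceil M\alpha(n)\rceil$ consecutive points $Bn,Bn+1,\ldots,Bn+M_n-1$, each assigned the common value $\alpha(n)/M_n\le 1$, where $M$ is large and $B$ is huge relative to $M$ and $N$. The resulting $\tilde\alpha$ is a fractional set, and for $B$ sufficiently large $\tilde\alpha+k\cdot\tilde\alpha$ decomposes exactly cluster-by-cluster. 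Using the entropy characterization of Lemma~\ref{C:Entropy And Scale of Norm}, one then tracks how the optimal exponent for $\tilde\alpha+k\cdot\tilde\alpha$ converges to that of $\alpha+k\cdot\alpha$ as $M\to\infty$, yielding
\[
\frac{\log\norm{\tilde\alpha+k\cdot\tilde\alpha}}{\log\norm{\tilde\alpha}}\longrightarrow\frac{\log\norm{\alpha+k\cdot\alpha}}{\log\norm\alpha}
\]
uniformly in $|k|\le N$. Step 1 applied to $\tilde\alpha$ then finishes the proof.

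\textbf{Main obstacle.} Two difficulties compete. First, the lower-bound concentration in Step 1 is delicate when $\alpha+k\cdot\alpha$ is comfortable or opulent: Theorem~\ref{T:fractional}'s $o$-error term is only guaranteed in the spartan regime, so the second-moment calculation has to be carried out by hand using the full multiplicity structure of $\alpha+k\cdot\alpha$. Making this unconditional for arbitrary $\alpha$ is effectively the content of the stronger Conjecture~\ref{C:Big Conjecture}, which may be substantially harder. Second, in Step 2 the interval clusters introduce a $k$-dependent interval-sum factor $|[0,M_n)+k\cdot[0,M_{n'})|$ into each cluster of $\tilde\alpha+k\cdot\tilde\alpha$, and extracting precisely $\norm{\alpha+k\cdot\alpha}$ (rather than a spurious contribution from the cluster geometry) in the $M\to\infty$ limit requires careful bookkeeping with Lemma~\ref{C:Entropy And Scale of Norm}. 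A cleaner alternative is to replace intervals by ``fractal'' clusters drawn themselves from a smaller random fractional set, so that the clusters behave like fractional-set shadows at the fine scale too; but then controlling all $|k|\le N$ simultaneously at both the coarse and the fine scale becomes the new bottleneck, and we expect this to be the crux of any full proof.
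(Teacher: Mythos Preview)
The statement you are attempting to prove is Conjecture~\ref{C:Small Conjecture}, which the paper explicitly leaves open; there is no proof in the paper to compare against. So the relevant question is whether your proposal actually closes the gap, and it does not---as you yourself acknowledge in the ``Main obstacle'' paragraph.

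The central hole is the lower-bound concentration in Step~1. You need, for each $k$, that $|S_n+k\cdot S_n|^{1/n}\to\norm{\alpha+k\cdot\alpha}$ with high probability, not merely in expectation. Theorem~\ref{T:fractional} gives convergence of the expectation, and in the spartan regime the $o$-term upgrades this to concentration; outside the spartan regime the paper provides nothing, and establishing it is precisely Conjecture~\ref{C:Big Conjecture}. Your proposed fix---a second-moment bound on $|(S_n+k\cdot T_n)\cap R_n|$---does not do the job even if it succeeds: the rainbow connection (Theorems~\ref{T:rainbow connection} and~\ref{T:rainbow connection II}) equates the \emph{expectations} of $|(S_n+k\cdot S_n)\cap R_n|$ and $|(S_n+k\cdot T_n)\cap R_n|$, but says nothing about their variances or distributions. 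Concentration of the independent-copy quantity does not transfer to the self-sum; to control $\Var|(S_n+k\cdot S_n)\cap R_n|$ you must bound $\Pr(v,w\in S_n+k\cdot S_n)$ for pairs of rainbow vectors, where the four underlying summands can overlap in complicated ways that the rainbow property does not untangle. This is genuinely the hard part and your sketch does not address it.

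Step~2 is a reasonable heuristic, but the claimed convergence $\log\norm{\tilde\alpha+k\cdot\tilde\alpha}/\log\norm{\tilde\alpha}\to\log\norm{\alpha+k\cdot\alpha}/\log\norm\alpha$ is asserted rather than argued. The cluster construction multiplies $\norm{\tilde\alpha}$ by roughly $M$ but inflates $\norm{\tilde\alpha+k\cdot\tilde\alpha}$ by a factor depending on the interval sumset sizes $|[0,M_n)+k\cdot[0,M_{n'})|$, which grow like $M$ as well; getting the ratio of logarithms to come out right requires a calculation you have not supplied, and the entropy characterisation of Lemma~\ref{C:Entropy And Scale of Norm} does not make it automatic.

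In short: your plan correctly isolates the two pressure points, but neither is resolved, and the first is tantamount to the stronger open Conjecture~\ref{C:Big Conjecture}.
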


We also ask whether the fractional dilate versions of the open questions from
Section~\ref{S:Construction for (1,2)} are true. For the reader's convenience,
we write out these questions in full. Note that Theorem~\ref{T:fractionalsize}
gives a useful way of computing $\norm{\alpha+\alpha}$ and $\norm{\alpha+2\cdot\alpha}$.

\begin{qn}\label{Q:questions for dilates}
Suppose that $\alpha:\Z\to[0,1]$ is a function with finite support. We write
\begin{align*}
\norm\alpha&=\sum_i\alpha(i)\\
\norm{\alpha+\alpha}&=\inf_{0\le p\le 1}\sum_i\left(\sum_{j+k=i}\alpha(j)\alpha(k)\right)^p\\
\norm{\alpha+2\cdot\alpha}
&=\inf_{0\le p\le 1}\sum_i\left(\sum_{j+2k=i}\alpha(j)\alpha(k)\right)^p.
	\end{align*}

\noindent Are each of the following statements true for all such $\alpha$?

\begin{enumerate}
\item $\norm\alpha\norm{\alpha+2\cdot\alpha}\le\norm{\alpha+\alpha}^2$
\item $\norm{\alpha+2\cdot\alpha}\le\log_34\norm{\alpha+\alpha}$
\item $\norm{\alpha+2\cdot\alpha}\ge\norm{\alpha+\alpha}$
\end{enumerate}
\end{qn}

Since a subset of the integers is just a fractional dilate of its
characteristic function, positive answers to these questions would imply
positive answers to the corresponding questions in
Section~\ref{S:Construction for (1,2)}. If either
Conjecture~\ref{C:Big Conjecture} or Conjecture~\ref{C:Small Conjecture}
is true, the questions in the two sections are equivalent. A negative answer to
any of these questions would either lead to an extension of the feasible
region $F_{1,2}$, or to a better understanding of the above conjectures.

The biggest open question we leave is whether fractional dilates can find a
use elsewhere.

\bibliographystyle{amsplain}
\bibliography{CPS}

\end{document}